\title{A higher homotopic extension of persistent (co)homology}
\author{Estanislao Herscovich
\footnote{Departamento de Matem\'atica, FCEyN, Universidad de Buenos Aires, Argentina. 
The author is also a research member of CONICET (Argentina). 
On leave of absence from Institut Joseph Fourier, Universit\'e Grenoble I, France. 
The author would also like to thank the Alexander von Humboldt Foundation and the Bielefeld University, for their support during part of this work. This work was also partially supported by UBACYT 20020130200169BA, UBACYT 20020130100533BA, PIP-CONICET 2012-2014 11220110100870, 
MathAmSud-GR2HOPF, PICT 2011-1510 and PICT 2012-1186.}}
\date{}
\newtheorem{theorem}{Theorem}[section]
\newtheorem{proposition}[theorem]{Proposition}
\newtheorem{remark}[theorem]{Remark}
\newtheorem{fact}[theorem]{Fact}
\numberwithin{equation}{section}
\def\place{{-}}
\def\Ker{\mathop{\rm Ker}\nolimits}
\newcommand\NN{{\mathbb{N}}}
\newcommand\ZZ{{\mathbb{Z}}}
\newcommand\RR{{\mathbb{R}}}
\def\place{{-}}
\begin{document}

\maketitle
                                                     
\hrulefill
\begin{abstract}
\noindent Our objective in this article is to show a possibly interesting structure of homotopic nature appearing in persistent (co)homology. 
Assuming that the filtration of the (say) simplicial set embedded in $\RR^{n}$ induces a multiplicative filtration (which would not be a so harsh hypothesis in our setting) 
on the dg algebra given by the complex of simplicial cochains, 
we may use a result by T. Kadeishvili to get a unique (up to noncanonical equivalence) $A_{\infty}$-algebra structure on the complete persistent cohomology of the filtered simplicial (or topological) set. 
We then provide a construction of a (pseudo)metric on the set of all (generalized) barcodes (that is, of all cohomological degrees) 
enriched with the $A_{\infty}$-algebra structure stated before, refining the usual bottleneck metric, and which is also independent of the particular $A_{\infty}$-algebra structure chosen 
(among those equivalent to each other).
We think that this distance might deserve some attention for topological data analysis, for it in particular can recognize different linking or foldings patterns, as in the Borromean rings. 
As an aside, we give a simple proof of a result relating the barcode structure between persistent homology and cohomology. 
This result was observed in \cite{dSMVJ11bis} under some restricted assumptions, which we do not suppose. 
\end{abstract}

\textbf{Mathematics subject classification 2010:} 16E45, 16W70, 18G55, 55U10, 68U05.

\textbf{Keywords:} persistent homology, dg algebras, $A_{\infty}$-algebras, bottleneck metric. 

\hrulefill
\section{Introduction}

The aim of this article is to extend some of the constructions appearing in persistent (co)homology to consider further algebraic structures. 
More precisely, under the assumption that an increasing filtration of a simplicial set (or a topological space) embedded in $\RR^{n}$ induces a 
decreasing multiplicative filtration of the dg algebra given by the complex of simplicial (or singular) cochains,  
we consider the $A_{\infty}$-algebra structure on the cohomology of the latter dg algebra, which is precisely the (complete) persistent cohomology of the filtered simplicial set (or the topological space). 
We have to remark here that, unlike the situation where the indexing set for the filtration is (a subset of) $\ZZ$ and the algebraic properties of the latter are useful to simplify 
the computations by regarding some associated algebraic structures, in the context of topological data analysis these conditions are only hardly satisfied, 
since for the latter the filtrations are more or less imposed, as they are constructed from the metric of the embedded simplicial set 
(or the topological space).   
For this reason we prefer to relax the hypotheses on the product of the set of indices $P$ (and even on the set itself), 
which also led us to consider more naturally \emph{exact couples systems} (rather than classical exact couples), which were introduced by B. Matschke in his preprint \cite{Ma13}. 
For instance we may take as the product in $P$ the one given by taking the infimum, which is the coarsest possible in case $P$ is totally ordered and the members of the filtration 
are dg subalgebras of the dg algebra under study. 
These hypotheses are always satisfied in the case of the filtration of the dg algebra of normalized cochains on a simplicial set provided with the induced filtration from any filtration 
of the simplicial set. 

The main contribution of this article is the construction of a distance on the collection of all generalized barcodes (\textit{i.e.} including all the barcodes of all cohomological degrees) 
together with the $A_{\infty}$-algebra on the complete persistent cohomology described before, 
which refines the usual bottleneck metric and which is invariant under quasi-isomorphisms of $A_{\infty}$-algebras. 
Even though the $A_{\infty}$-algebra structure considered is only a shadow of the complete structure lurking behind, 
for it does not take (completely) into account the structure coming from the action of the category associated to the poset of indices 
(see the last paragraph of Subsection \ref{subsection:gen}),
we believe that this metric may still be of interest for analysing data structures. 
In particular, since the $A_{\infty}$-algebra structure of the cohomology of a dg algebra extends in a systematic manner the Massey products (see \cite{LPWZ09}, Thm. 3.1 and Cor. A.5), 
we believe that it may be used for recognizing linking or foldings patterns, as in the typical case of Borromean rings in contrast to unlinked rings (\textit{cf.} \cite{MSS97}), etc. 

The possible algebraic (resp., coalgebraic) structures on persistent cohomology (resp., homology) have not deserved much attention up to the present time in the literature as far we know. 
We want to remark that in the preprint article \cite{BM14} by F. Belch\'{\i} and A. Murillo, the authors seem to proceed in our opinion in a different manner as ours (regardless of the fact they consider $A_{\infty}$-coalgebras, as pointed out in the last paragraph of this section), since they produce barcodes by means of the higher structure maps on the cohomology of the entire complex 
in a similar fashion to those produced in plain persistent homology theory. 
Our point of depart is somehow similar, since, in the case the binary operation on the totally ordered set $P$ 
is given by taking minimum, the $A_{\infty}$-algebra structure of the cohomology $H^{\bullet}(\mathcal{D})$ 
of the Rees dg algebra is obtained in essentially the same (but dual) manner as the $A_{\infty}$-coalgebra they consider 
(using the morphisms $f^{i,i+1}$ of that article). 
However, we believe that our manner of proceeding is more systematic and it clarifies the underlying structures, 
for it also indicates that, in order to study the higher multiplications in complete generality 
we should consider instead an intermediate object (see the last paragraph in Subsection \ref{subsection:gen}). 
Furthermore, if the filtration under consideration is better behaved from the point of view of algebra (as the skeletal filtration), our $A_{\infty}$-algebra has more richer structure than (the dual to) that considered in \cite{BM14}.
Finally, in our case, we do not produce any further barcodes but we use the $A_{\infty}$-algebraic structure 
on the complete persistent cohomology group to compare the classic ones.   

On the other hand, the cup product on the persistent cohomology has been studied in the B.Sc. thesis \cite{Yar10} by the student A. Yarmola of M. Vejdemo-Johansson. 
As far as we can tell, our extension of the bottleneck distance does not have any relation whatsoever with the comparison scheme considered in that thesis, 
since the comparison between cup products there is based on the indices of the filtration of the corresponding complexes, 
which are completely independent of the metric 
of the space which should have given origin to them, besides from the fact that their proposal relies only on some consequences of the existence of the cup product. 

The structure of the article is as follows. 
In Section \ref{section:prel}, we provide the basic background on persistent cohomology and on $A_{\infty}$-algebras. 
Since the literature has treated the case of persistent cohomology less frequently than the corresponding homology, we find useful to provide all the definitions, which is done in Subsection \ref{subsection:pers}. 
We have however showed that such distinction is not very stark in some cases. 
Indeed, it follows from a standard construction in homological algebra that, under some usual finiteness and boundedness assumptions, 
the structures of barcodes on bounded intervals of $n$-th persistent homology and of $(n+1)$-th persistent cohomology coincide, 
and there is an equivalence between the restriction of the barcode of $n$-th type for the persistent cohomology to the subset of 
unbounded below (but bounded above) intervals and the restriction of the barcode of $n$-th type for the persistent homology to the subset of 
unbounded above (but bounded below) intervals given by sending a bounded below interval to its complement.  
This has already been observed in \cite{dSMVJ11bis}, for some restricted assumptions, and their proof seems in fact more involved 
than ours in our opinion. 
In any case, we have given a simple proof of this result under more general hypotheses (see Proposition \ref{proposition:dual}). 
Another reason for the exposition is the fact that our definition of persistent cohomology does not really coincide with the one proposed by other authors, as in \cites{dSMVJ11, dSMVJ11bis}, 
but with what they call relative persistent cohomology. 
We have chosen to use the simpler name persistent cohomology, because, from a wider perspective, the difference between 
homological and cohomological dg modules is only conventional.  
On the other hand, we have recalled in Subsection \ref{subsection:kad} the basic definitions on the theory of $A_{\infty}$-algebras together with a result of T. Kadeishvili, which roughly states that the cohomology of any $A_{\infty}$-algebra 
(in particular, any dg algebra) over a field has also a structure of $A_{\infty}$-algebra, such that both are equivalent. 
The $A_{\infty}$-algebra structure on the cohomology is however nonunique, but unique only up to noncanonical equivalence. 

In Section \ref{section:a_infpcoho} we provide the main constructions of the article. 
After briefly indicating in Subsection \ref{subsection:gen} how Kadeishvili's theorem appears naturally in the situation of persistent cohomology under some multiplicative assumptions on the filtration, we provide in Subsection \ref{subsection:bm} an extension of the bottleneck distance on the space of (underlying) $A_{N}$-algebra structure on persistent cohomology, 
for any $N \in \NN \sqcup \{ + \hskip -0.4mm \infty \}$ (see Proposition \ref{proposition:metric}). 
The construction is a little involved but in our opinion fairly natural, as it follows what we believe is the underlying philosophy.  
Moreover, in case $N = 1$, our construction coincides with the usual bottleneck distance (for the complete persistent cohomology, recalled in \eqref{eq:bm}). 
Furthermore, the extended distance is independent of the explicit $A_{N}$-algebra structures chosen for the persistent cohomologies, as one may naturally require.  

Even though we work with the algebraic structure of persistent cohomology, we could have equally worked with the coalgebraic structure of persistent homology, \textit{i.e.} 
by regarding the $A_{\infty}$-coalgebraic structure of the complete persistent homology for a comultiplicative filtration, by using a dual statement of that of Kadeishvili. 
We leave the reader verify that all of our constructions are also valid in that setting \textit{mutatis mutandi}, and the arguments are in fact \textit{verbatim} 
(see the comments at the last paragraph of the introduction of \cite{Her14}). 

\section{Preliminaries on basic algebraic structures}
\label{section:prel}

In this section we shall recall the basic definitions and constructions on persistent (co)homology and on $A_{\infty}$-algebras. 
With respect to the algebraic structures we shall consider, \textit{e.g.}, algebras, modules, etc., 
we shall follow the same convention on gradings and definitions as those of \cite{Her14a}. 
As explained in that article, these definitions are completely standard and well-known in the literature but they were sometimes presented under minor variations to suit our intentions. 
We refer to that article and the bibliography therein. 
We recall that $k$ will denote a field (which we also consider as a unitary graded ring concentrated in degree zero), 
the term \emph{module} (sometimes decorated by adjectives such as graded, or dg) will denote a symmetric bimodule over $k$ (correspondingly decorated), 
and morphisms between modules will always be $k$-linear (and satisfying further requirements if the modules are decorated as before), unless otherwise stated. 

As usual, all unadorned tensor products $\otimes$ would be over $k$. 
Finally, $\NN$ will denote the set of (strictly) positive integers, whereas $\NN_{0}$ 
will be the set of nonnegative integers. 
Similarly, for $N \in \NN$, we denote by $\NN_{\leq N}$ the set of positive integers less than or equal to $N$. 
Of course, similar notation could be used for other inequality signs.   

\subsection{\texorpdfstring{Persistent (co)homology}{Persistent (co)homology}} 
\label{subsection:pers}

The notion of persistent homology was introduced more or less simultaneously but in an independent manner by V. Robins in \cite{Ro99}, F. Cagliari, M. Ferri, and P. Pozzi in \cite{CFP01}, 
P. Frosini and C. Landi in \cite{FL99}, and H. Edelsbrunner, D. Letscher and A. Zomorodian in \cite{ELZ02}. 
We refer the reader to the survey \cite{EH08} by the first author of the last cited article and J. Harer, or the comprehensive text \cite{EH10} by the same authors, and also the nice article \cite{CZ05} by 
G. Carlsson and Zomorodian. 

We shall briefly recall a (rather) general scheme for persistent (co)homology. 
Since we are more interested in taking profit from the involved algebraic structures we will use cohomology instead of homology. 
Cohomological variants of the theory have already deserved some attention (see \cites{dSMVJ11, dSMVJ11bis, HB11}). 
Our definition of persistent cohomology does not fully coincide with the corresponding one of the mentioned articles, but with the one considered there under the name of \emph{relative persistent cohomology}.  
We note that we are not considering the more general possible setup, since our persistence modules will be eventually indexed by an essentially discrete totally ordered set $P$, 
but in most of the applications this is already enough general, 
as in the majority of the cases the totally ordered set $P$ is in fact essentially finite.

\subsubsection{\texorpdfstring{Generalities on gradings and filtrations}{Generalities on gradings and filtrations}} 
\label{subsubsection:gengra}

We recall that a \emph{(cohomological) graded module} over $k$ is a module over $k$ provided with a decomposition of $k$-modules of the form $C = \oplus_{n \in \ZZ} C^{n}$.
If $c$ is a nonzero homogeneous element of a graded module $C$ over $k$ we define the \emph{degree} $\deg c \in \ZZ$ of $c$ by $c \in C^{\deg c}$. 
A \emph{(cohomological) differential graded module (or dg module)} over $k$ is a graded $k$-module $C = \oplus_{n \in \ZZ} C^{n}$ together with a homogeneous $k$-linear map 
$d_{C} : C \rightarrow C$ of degree $+1$, \textit{i.e.} $d_{C}(C^{n}) \subseteq C^{n+1}$ for all $n \in \ZZ$, such that it is a differential, \textit{i.e.} $d_{C} \circ d_{C} = 0$. 
We shall regard a graded module as a dg module with zero differential. 
The homological versions of the previous definitions would be just given by taking lower indices $C = \oplus_{n \in \ZZ} C_{n}$, 
and the differential satisfying instead $d_{C}(C_{n}) \subseteq C_{n-1}$, for all $n \in \ZZ$. 
In fact, all the cohomological definitions can be changed into homological ones (and \textit{vice versa}) by the usual convention $C_{n} = C^{-n}$, for $n \in \ZZ$. 
The distinction between these two notations seems mild for the moment but it is very convenient for explaining several phenomena in a clean manner. 
In any case, we shall most of the time follow the cohomological convention by the reasons stated previously, except in Subsubsection \ref{subsubsection:homocohomo}, where we compare persistent homology and cohomology.  

Let $(P , \succcurlyeq)$ be a partially ordered set (poset). 
A \emph{$P$-filtered dg module} 
will be a dg $k$-module $C = \oplus_{n \in \ZZ} C^{n}$ provided with a (decreasing) filtration $\{ F^{p}C \}_{p \in P}$ of dg submodules, 
\textit{i.e.} $F^{p}C = \oplus_{n \in \ZZ} F^{p}C^{n}$ is a graded module whose $n$-th homogeneous component is $F^{p}C^{n} = F^{p}C \cap C^{n}$, 
$d_{C}(F^{p}C) \subseteq F^{p}C$, and $F^{p}C \supseteq F^{p'}C$, for all $p, p' \in P$ such that $p \preccurlyeq p'$. 
Since the poset will be clear from the context when dealing with dg modules 
provided with filtrations, we shall usually omit the former from the notation. 
If the dg module $(C,d_{C})$ was homological, one would consider increasing filtrations of dg modules instead.
We say that the filtration of $C$ is \emph{exhaustive} if $\cup_{p \in P} F^{p}C = C$, and that it is \emph{Hausdorff} if $\cap_{p \in P} F^{p}C = \{ 0 \}$. 
A particular case of interest is when the filtration is \emph{bounded}, \textit{i.e.} there exists $p', p \in P$ such that $p' \succcurlyeq p$
satisfying that $F^{p'}C = C$ and $F^{p'}C = 0$. 
Moreover, we say it is \emph{essentially discrete (resp., finite)}, 
\textit{i.e.} there exists a  subposet
$P_{s}$ of the poset $P$ which is a discrete subset of the topological space $P$ 
provided with the order topology (resp., a finite subset of $P$) and satisfying that for all $p \in P$ there exists $p' \in P_{s}$ satisfying that $F^{p}C = F^{p'}C$.   

\begin{remark}
\label{remark:hoco}
The conventions between the increasing filtrations in the case of homological dg modules 
and decreasing ones in the cohomological case should not cause much confusion, 
and they are in fact related as follows. 
We first recall that the \emph{opposite poset} $P^{\mathrm{op}}$ of a poset $P$ 
has the same underlying set as $P$ but with the order $\succcurlyeq^{\mathrm{op}}$ defined as 
$p \succcurlyeq^{\mathrm{op}} p'$ if an only if $p' \succcurlyeq p$.  
If $C^{\bullet}$ is a cohomological dg module provided with a decreasing $P$-filtration $\{F^{p}C\}_{p \in P}$, then the corresponding homological dg module $C_{\bullet}$ 
(which we recall it is defined as $C_{n} = C^{-n}$, for $n \in \ZZ$) has an increasing filtration over the opposite poset $P^{\mathrm{op}}$ 
given by $F_{p}C = F^{p}C$, where the first $p$ is in the poset $P^{\mathrm{op}}$, whereas the second is in $P$. 
Of course this construction is clearly applied in the opposite direction from increasingly filtered homological dg modules to decreasingly filtered cohomological ones. 
\end{remark}

Let $(C,d_{M})$ be a filtered dg module for a filtration $\{ F^{p}C \}_{p \in P}$. 
This situation naturally appears if $X$ is for instance a topological space provided with an increasing filtration $\{ X_{p} \}_{p \in P}$ of topological subspaces, 
and one considers the complex $C^{\bullet}(X,k)$ computing singular cohomology which is provided with a decreasing filtration of $\{ F^{p}C^{\bullet}(X,k) \}_{p \in P}$, 
where $F^{p}C^{\bullet}(X,k)$ is the kernel of the canonical map $C^{\bullet}(X,k) \rightarrow C^{\bullet}(X_{p},k)$, which is the dual of the inclusion mapping $C_{\bullet}(X_{p},k) \rightarrow C_{\bullet}(X,k)$. 
We could also consider instead a simplicial set $\mathcal{X}$, provided with an increasing filtration $\{ \mathcal{X}_{p} \}_{p \in P}$ of simplicial sets, 
and we regard thus the complex $C^{\bullet}(\mathcal{X},k)$ of normalized cochains (\textit{i.e.} the maps $f : \mathcal{X}_{n} \rightarrow k$ that vanish on degenerate cycles) 
computing the simplicial cohomology, provided with a decreasing filtration $\{ F^{p}C^{\bullet}(\mathcal{X},k) \}_{p \in P}$, 
where $F^{p}C^{\bullet}(\mathcal{X},k)$ is the set of normalized cochains vanishing on $\mathcal{X}_{p}$ (\textit{cf.} \cite{FHT01}, Ch. 10, (d)). 
Furthermore, one is typically interested in the situation that either the topological space or the simplicial set are embedded in $\RR^{n}$, so they are in fact metric spaces, 
and the filtration is constructed by means of the metric. 
If one forgets about the embedding into $\RR^{n}$, the first situation is in fact a particular case of the first one, 
since one may consider $\mathcal{X}$ to be the simplicial set $S_{\bullet}(X)$ of singular chains of $X$. 
A typical assumption in the theory is that the filtration is \emph{Hausdorff}, \textit{i.e.} $\cap_{p \in P} X_{p} = \emptyset$ (resp., $\cap_{p \in P} \mathcal{X}_{p} = \emptyset$), 
and \emph{exhaustive}, \textit{i.e.} $\cup_{p \in P} X_{p} =  X$ (resp., $\cup_{p \in P} \mathcal{X}_{p} = \mathcal{X}$). 
This in turn implies that the filtration of the corresponding cochain complex computing cohomology of $X$ or $\mathcal{X}$ is exhaustive or Hausdorff, respectively, as well. 
Moreover, the definitions of bounded, essentially discrete or finite make perfect sense for the increasing filtrations of sets 
mentioned before as well. 
From now on, we shall assume that all the filtrations are Hausdorff and exhaustive. 

\begin{remark}
\label{remark:vr}
One of the most considered filtered dg modules in topological data analysis is the \emph{Vietoris-Rips complex} $\operatorname{VR}(X)$ for a finite set $X \subseteq \RR^{n}$ provided with the induced metric. 
In that case the (homological) dg module is given by the chain complex $\operatorname{VR}(X) = C_{\bullet}(\mathcal{X},k)$, where $\mathcal{X}$ is the simplicial set associated to the combinatorial simplicial complex $(X , \mathscr{P}(X) \setminus \{ \emptyset \})$ (fixed an ordering), where $\mathscr{P}(X)$ denotes the set of subsets of $X$ (see \cite{Wei94}, Example 8.1.8). 
The (increasing) $\RR$-filtration $G_{\epsilon}\operatorname{VR}(X)$, for $\epsilon \in \RR$, is defined as $C_{\bullet}(G_{\epsilon}\mathcal{X},k)$, where $G_{\epsilon}\mathcal{X}$ is the simplicial set associated to the combinatorial subsimplicial complex of $(X , \mathscr{P}(X) \setminus \{ \emptyset \})$ 
formed by the the simplices $\sigma = \{ x_{0}, \dots, x_{i} \}$ such that $d(x_{j},x_{l}) < \epsilon$, 
for all $j, l \in \{1, \dots, i \}$. 
For further details, together with the definition of other filtrations of the kind using the metric, 
as the \emph{\v{C}ech filtration} or what is called the \emph{$\alpha$-complex}, 
we refer to \cite{EH10}, Ch. III, Sections 2--4. 
We would like to remark however that the possible filtrations considered in topological data analysis 
are somehow restricted (as before) to be constructed in a similar fashion, heavily depending on the metric. 
Even though this point may seem awkward, the rigidity on the filtrations to be chosen is however 
important in principle if one is interested in some algebraic properties, since the mentioned filtrations, 
supposed to be essentially finite and thus may be even reindexed with a finite subset of $\ZZ$, 
are not necessarily going to be \emph{comultiplicative} with respect to the standard coalgebra structure 
on the corresponding chain of complexes in any sensible manner. 
Indeed, by choosing positive large indices for the filtration one may always get that the comultiplicative structure on the associated spectral sequence is trivial. 
\end{remark} 

\subsubsection{\texorpdfstring{From exact couple systems to persistent cohomology}{From exact couple systems to persistent cohomology}} 
\label{subsubsection:ecs}

We will now recall the definition of an exact couple system 
over a bounded poset $P$, \textit{i.e.} a poset having a minimum and a maximum, 
which we shall denote by $\mathrm{min}(P)$ and $\mathrm{Max}(P)$, respectively. 
This definition is a generalization recently introduced by B. Matschke in his preprint article \cite{Ma13} of the usual notion of exact couple given by W.S. Massey in \cite{Ma52}. 
We remark that we use a slightly different indexing convention, which is more convenient in our setting. 
Consider first the graded category $P_{2}$, whose set of objects $\mathrm{Ob}(P_{2})$ is formed by the pairs $(p',p)$ satisfying that $p' \preccurlyeq p$ and with the following morphisms. 
For each pair of objects $(p'_{1}, p_{1}), (p'_{2}, p_{2})$ in $P_{2}$ 
satisfying that $p'_{1} \succcurlyeq p'_{2}$ and $p_{1} \succcurlyeq p_{2}$ we have a unique morphism 
\[     \phi_{p'_{1},p_{1}}^{p'_{2},p_{2}} : (p'_{1}, p_{1}) \rightarrow (p'_{2}, p_{2})     \] 
of degree $0$, and for each three elements $p', p, p''$ in $P$ such that $p', p'' \preccurlyeq p$ we have a unique morphism 
\[     \psi_{p',p,p''} : (p', p) \rightarrow (p'', \mathrm{Max}(P))     \] 
of degree $1$. 
There are no other morphisms, and the compositions (and identities) are thus uniquely determined 
(this implies in particular the identities at the first paragraph of \cite{Ma13}, Rmk. 4.2). 
A \emph{cohomological exact couple system} over the bounded poset $P$ is a (degree preserving covariant) functor $\mathcal{E}$ from the graded category $P_{2}$ to the (graded) category of graded $k$-modules satisfying that, for each object $(p',p)$ 
in $P_{2}$, the triangle 
\[
\xymatrix
{
D^{p}
\ar[rr]^{\mathrm{i}^{p',p}}
&
&
D^{p'}
\ar[dl]^{\mathrm{j}^{p',p}}
\\
&
E^{p'}_{p}
\ar[ul]^{\mathrm{k}^{p',p}}
&
}
\]
is exact at each vertex, where we have used the (standard) notation given by
\begin{equation}
\begin{split}
D^{p} &= \mathcal{E}(p, \mathrm{Max}(P)), \hskip 1cm E^{p'}_{p} = \mathcal{E}(p',p), 
\\
\mathrm{i}^{p',p} = \mathcal{E}(\phi^{p',\mathrm{Max}(P)}_{p,\mathrm{Max}(P)}),  &\hskip 1cm 
\mathrm{j}^{p',p} = \mathcal{E}(\phi^{p',p}_{p',\mathrm{Max}(P)}), \hskip 1cm
\mathrm{k}^{p',p} = \mathcal{E}(\psi_{p',p,p}). 
\end{split}
\end{equation}
For all $p', p, p'' \in P$ such that $p'' \succcurlyeq p \succcurlyeq p'$ 
one may define the morphism $d^{p',p, p''} = \mathrm{j}^{p,p''} \circ \mathrm{k}^{p',p}$ from $E^{p'}_{p}$ to $E^{p}_{p''}$. 
They satisfy a differential property, \textit{i.e.} $d^{p,p'',p'''} \circ d^{p',p,p''} = 0$, for any  $p', p, p'', p''' \in P$ such that $p''' \succcurlyeq p'' \succcurlyeq p \succcurlyeq p'$.  
The indexing is fairly similar (but not exactly) to the one usually used for (classical) exact couples. 

Now, let $(C,d_{C})$ be a dg module over $k$ provided with a decreasing filtration $\{ F^{p}C \}_{p \in P}$ of the underlying graded module of $C$ such that $d_{C}(F^{p} C) \subseteq F^{p}C$, for all $p \in P$. 
We recall that we are assuming that the filtration is exhaustive and Hausdorff. 
Let us consider the bounded poset $\hat{P} = P \sqcup \{ - \infty, + \hskip -0.4mm \infty \}$, 
extending the order of $P$, and where $- \infty$ is the infimum of $\hat{P}$ and $+ \hskip -0.4mm \infty$ the supremum. 
Set $F^{+ \hskip -0.4mm \infty}C = 0$ and $F^{- \infty}C = C$. 
We define the dg $k$-modules 
\begin{equation}
\label{eq:reescomplex}
     \mathcal{D} = \bigoplus_{p \in \hat{P}} F^{p}C,     
\end{equation}
provided with the differential which sends $z \in F^{p}C$ to $d_{C}(z) \in F^{p}C$, for all $p \in \hat{P}$, and 
\begin{equation}
\label{eq:gr}  
   \mathcal{G} = \bigoplus_{(p',p) \in \mathrm{Ob}(\hat{P}_{2})} \frac{F^{p'}C}{F^{p}C},     
\end{equation} 
with the induced differential by $d_{C}$, \textit{i.e} 
it sends the class $z + F^{p}C$ to $d_{C}(z) + F^{p}C$, 
for $z \in F^{p'}C$. 
Furthermore, these dg modules are provided with bigradings. 
In the first dg module, the one degree comes from the poset $\hat{P}$, and will be called the \emph{filtration degree}, and the other comes 
from the cohomological grading of $C$, and will be still called \emph{cohomological degree}.  
We set thus 
\[     \mathcal{D}^{p,n} = F^{p}C^{n}.     \] 
The second dg module has one degree coming from the direct sum indexed by the set $\mathrm{Ob}(\hat{P}_{2})$, which may be regarded as a poset for the 
\emph{product order}, \textit{i.e.} $(p'_{1},p_{1}) \succcurlyeq (p'_{2},p_{2})$ if and only if $p'_{1} \succcurlyeq p'_{2}$ and $p_{1} \succcurlyeq p_{2}$. 
The second degree also comes from the cohomological grading of $C$. 
We shall write it as
\[     \mathcal{G}^{p',n}_{p} = (F^{p'}C^{n})/(F^{p}C^{n}).     \] 
With respect to their bigrading, it is easy to see that the differential of both $\mathcal{D}$ and $\mathcal{G}$ have cohomological degree $1$ and preserve the other degree.  
The dg module $\mathcal{D}$ will be called the \emph{Rees dg module associated to $C$}, 
whereas $\mathcal{G}$ is typically 
called the \emph{associated graded dg module of $C$}. 
From the previous dg modules we immediately obtain an exact couple system, which is called the \emph{exact couple system associated to the filtered dg module $C$}, as follows. 
More precisely, for every object $(p',p)$ in $\hat{P}_{2}$ define $E^{p'}_{p} = H^{\bullet}(F^{p'}C/F^{p}C)$ 
(this implies that for every $p \in \hat{P}$, $D^{p} = H^{\bullet}(F^{p}C)$), 
and for any pair of objects $(p'_{1}, p_{1}), (p'_{2}, p_{2})$ in $\hat{P}_{2}$ 
satisfying that $p'_{1} \succcurlyeq p'_{2}$ and $p_{1} \succcurlyeq p_{2}$, define 
$\mathcal{E}(\phi_{p'_{1},p_{1}}^{p'_{2},p_{2}}) : H^{\bullet}(F^{p'_{1}}C/F^{p_{1}}C) \rightarrow H^{\bullet}(F^{p'_{2}}C/F^{p_{2}}C)$ as the cohomology of the unique map of dg modules 
$F^{p'_{1}}C/F^{p_{1}}C \rightarrow F^{p'_{2}}C/F^{p_{2}}C$ induced by the canonical inclusions. 
For $p', p, p''$ in $\hat{P}$ such that $p', p'' \preccurlyeq p$, the map $\mathcal{E}(\psi_{p',p,p''})$ 
is given by the composition of the map $H^{\bullet}(F^{p'}C/F^{p}C) \rightarrow H^{\bullet}(F^{p}C)$ 
given by the Snake lemma applied to the short exact sequence of dg modules 
\[     0 \rightarrow F^{p}C \rightarrow F^{p'}C \rightarrow F^{p'}/F^{p}C \rightarrow 0     \] 
(see \cite{Wei94}, Thm. 1.3.1, Example 5.9.3), together with the map $H^{\bullet}(F^{p}C) \rightarrow H^{\bullet}(F^{p''}C)$ given by the cohomology of the canonical inclusion of dg modules $F^{p}C \rightarrow F^{p''}C$. 
It is straightforward to check that this gives an exact couple system, 
which may be depicted as follows  
\[
\xymatrix
{
H^{\bullet}(\mathcal{D}^{p}) 
\ar[rr]^{\mathrm{i}^{p',p}}
&
&
H^{\bullet}(\mathcal{D}^{p'})
\ar[dl]^{\mathrm{j}^{p',p}}
\\
&
H^{\bullet}(\mathcal{G}^{p'}_{p})
\ar[ul]^{\mathrm{k}^{p',p}}
&
}
\]

As usual, given a poset $P$, we may consider the category $\tilde{P}$ (also denoted $P^{\sim}$)
whose objects are given by the elements of $P$ and with the following morphisms. 
For every pair of elements $p', p \in P$ satisfying that $p \succcurlyeq p'$, we define a unique morphism 
$\omega^{p'}_{p} : p \rightarrow p'$. 
There are no other morphisms, and the compositions (and identities) are thus uniquely determined. 
This category is sometimes called the \emph{category associated to the poset $P$}. 
We see now that the dg module $\mathcal{D}$ is a module over the category $\tilde{P}$ as well. 
In fact, considering $k.\tilde{P}$, which is the graded category with the same objects as $\tilde{P}$ and 
whose space of morphisms from $p$ to $p'$ is given by the free $k$-module generated by $\omega^{p'}_{p}$, 
considered as a graded module concentrated in degree zero, 
we may regard $\mathcal{D}$ as a degree preserving covariant functor $\tilde{\mathcal{D}}$ from $k.\tilde{P}$ to the (graded) category of complexes of modules over $k$ in the canonical manner. 
We remark first that the latter category has as objects the dg modules over $k$ but the spaces of homomorphisms are given by the corresponding 
sets of morphisms of dg modules of degree zero 
commuting with the differentials. 
For every $p \in P$ we set $\tilde{\mathcal{D}}(p) = \mathcal{D}^{p}$, and for 
$\omega^{p'}_{p}$ we set $\tilde{\mathcal{D}}(\omega^{p'}_{p})$ as the canonical inclusion 
$\mathcal{D}^{p} \rightarrow \mathcal{D}^{p'}$ of dg modules. 
By taking cohomology pointwise, 
\textit{i.e.} by considering the composition $H^{\bullet} \circ \tilde{\mathcal{D}}$, 
we get a covariant functor from $k.\tilde{P}$ to the category of graded modules over $k$ (provided only with morphisms of degree $0$). 
In other words, we may regard the cohomology of $\mathcal{D}$ as a graded module over $k.\tilde{P}$, 
which will be called the \emph{complete persistent cohomology} of the filtered dg module $C$.
We shall also denote it by $H^{\bullet}(\mathcal{D})$. 
Analogously, for each $n \in \ZZ$, the $n$-th cohomology of $\mathcal{D}$ can be regarded as a module over $k.\tilde{P}$, denoted by $H^{n}(\mathcal{D})$ and called the \emph{$n$-th persistent cohomology of the filtered dg module $C$}, and we see easily that 
\[     H^{\bullet}(\mathcal{D}) \simeq \bigoplus_{n \in \ZZ} H^{n}(\mathcal{D}).     \]
Given $p' , p \in P$ such that $p \succcurlyeq p'$, the \emph{$n$-th persistent cohomology 
$PH^{n}_{p,p'}(C)$ of the filtered dg module $C$ starting at $p$ and ending at $p'$} 
is the image of the morphism $\omega^{p'}_{p}$ under the functor $H^{n}(\mathcal{D})$. 
In other words, it is the image of the map 
\[     H^{n}(F^{p}C) \rightarrow H^{n}(F^{p'}C)     \]
induced by the canonical inclusion $F^{p}C \rightarrow F^{p'}C$. 

The structure of the previous module(s) is in general too complicated to be dealt with for a 
general poset $P$. 
For the rest of this subsection, we shall assume that $P$ is totally ordered and such that its order topology is \emph{separable}, 
\textit{i.e.} it has a countable dense subset. 
In this case, for every $n \in \ZZ$ the module $H^{n}(\mathcal{D})$ over $k.\tilde{P}$ is a \emph{(left) persistence module} (on $P$), 
\textit{i.e.} a $k$-linear covariant functor from $k.\tilde{P}$ to the category of $k$-modules. 
We shall usually omit the adjective left in order to simplify the notation. 
Of course, morphisms of persistence modules are just natural transformations between the respective functors. 
For any persistence module $M$, its \emph{dimension function} 
is the map $P \rightarrow \NN_{0}$ 
given by sending $p$ to $\dim M(p)$.

The idea of considering (small $k$-linear) categories as generalizations of algebras over $k$ and ($k$-linear) functors from them to the category of $k$-modules as generalizations of modules is not new, but it is perhaps not 
so well-known, and initiated by P. Freyd in his thesis \cite{Fre60} and was further extended by B. Mitchell in 
\cite{Mi72}, and others. 
In fact, since small $k$-linear categories are equivalent to (nonunitary) $k$-algebras (satisfying some local unit conditions), such that the $k$-linear functors correspond to modules 
(also satisfying some local unit conditions), we may see that all elementary constructions for $k$-algebras and their modules are possible for $k$-linear categories and $k$-linear functors on them as well.
We refer the reader to the last mentioned bibliography for further details. 

\begin{remark}
\label{remark:coho2}
If we were interested in increasing $P$-filtrations of homological dg modules, 
we would consider instead right persistence modules, \textit{i.e.} $k$-linear 
contravariant functors from $k.\tilde{P}$ to the category of $k$-modules. 
We note that if $M$ is a right persistence module, then the composition 
of $M$ with the functor $(\place)^{*}$ given by taking the dual is a left persistence module, 
and \textit{vice versa}. 
Notice also that considering right persistence modules is equivalent to consider left persistence 
modules over $k.(P^{\mathrm{op}})^{\sim} \simeq (k.\tilde{P})^{\mathrm{op}}$. 
\end{remark}

As usual, if $M$ is a persistence module and $m \in M(p)$, for $p \in P$, 
we may denote $M(\omega^{p'}_{p})(m)$ simply by $\omega^{p'}_{p}.m$. 
Given a persistence module $M$, one may define its \emph{torsion submodule} $M_{\mathrm{tor}}$ 
given by 
\[     M_{\mathrm{tor}}(p) = \{ m \in M(p) : \text{there exists $p' \prec p$ such that $\omega^{p'}_{p}.m$ vanishes } \}.     \]     
A persistence module $M$ is called \emph{pointwise finite dimensional} if $M(p)$ is a finite dimensional 
$k$-module for every $p \in P$. 
A subset $I \subseteq P$ is called an \emph{interval} if it is nonempty and satisfies that, 
given $p', p'' \in I$ and $p \in P$ such that $p' \preccurlyeq p \preccurlyeq p''$, then $p \in I$. 
An interval is said to be \emph{bounded below (resp., above)} if there exists $p \in P \setminus I$ and  
$p' \in I$ such that $p \prec p'$ (resp., $p \succ p'$). 
An interval which is not bounded below (resp., above) will be called \emph{unbounded below (resp., above)}. 
An interval is called \emph{bounded} if it bounded above and below. 
Given an interval $I \subseteq P$, the corresponding \emph{interval (left) module} $k_{I}$ over $k.\tilde{P}$ 
is defined as follows. 
Set $k_{I}(p) = k$ if $p \in I$ and otherwise $k_{I}(p) = 0$. 
Moreover, for $p', p \in I$ define $k_{I}(\omega^{p',p})$ to be the identity of $k$. 
If either $p'$ or $p$ does not belong to $I$, $k_{I}(\omega^{p',p})$ is necessarily the zero map. 
Note that $(k_{I})_{\mathrm{tor}} = 0$ if and only if $I$ is bounded below, 
otherwise $(k_{I})_{\mathrm{tor}} = k_{I}$.  
It is clear to see that every interval module is indecomposable, since its endomorphism ring is just $k$, a local ring (see \cite{AF92}, Thm. 12.6), and two interval modules $k_{I}$ and $k_{I'}$ are isomorphic if and only if $I = I'$.
The importance of interval modules comes from the following result, proved by W. Crawley-Boevey 
(see \cite{CB13}, Thm. 1.1). 
The uniqueness statement follows immediately from the Krull-Schmidt-Azumaya theorem (see \cite{AF92}, Thm. 12.6). 
\begin{theorem}
\label{theorem:cb}
Any pointwise finite dimensional persistence module is a direct sum of interval modules. 
This decomposition is thus unique up to permutation of the interval modules. 
\end{theorem}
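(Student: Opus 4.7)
The plan is to split the theorem into an existence part and a uniqueness part, and to handle them separately.

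For uniqueness, the argument is essentially formal once existence is known: any natural transformation $k_I \to k_I$ is determined by the scalar it induces on a single nonzero stalk, so $\operatorname{End}(k_I) \cong k$, which is local. The Krull--Schmidt--Azumaya theorem (\cite{AF92}, Thm.~12.6), already invoked in the statement of the theorem, then gives uniqueness of the decomposition up to permutation of the interval summands.

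For existence, my plan is to show that every nonzero pointwise finite dimensional persistence module $M$ admits an interval module as a direct summand, and then finish by a Zorn's lemma argument applied to the poset of families of interval submodules of $M$ whose sum is direct and splits as a direct summand. To extract a single interval summand, I would fix $p \in P$ with $M(p) \neq 0$ and study the lattice of functorial subspaces of $M(p)$ of the form
\[     \im\bigl(M(q) \to M(p)\bigr) \cap \Ker\bigl(M(p) \to M(r)\bigr)     \]
for $q \succcurlyeq p \succcurlyeq r$ (respecting the convention of the paper that the arrows of $\tilde{P}$ go from larger to smaller indices). By pointwise finite-dimensionality this lattice of subspaces of $M(p)$ is artinian and noetherian, and the separability of the order topology of $P$ allows one to replace arbitrary choices of $q$ and $r$ by countable cofinal sequences. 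A suitably extremal element in this lattice produces a nonzero vector $x \in M(p)$ whose \emph{lifetime}---the set of $p' \in P$ to which $x$ both extends upward and survives downward---is an interval $I \subseteq P$, and the cyclic subfunctor of $M$ generated by $x$ is isomorphic to $k_I$.

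The main obstacle, which is the technical core of Crawley-Boevey's argument in \cite{CB13}, is to promote this interval submodule to a direct summand of $M$. Concretely, one must construct, compatibly for all $p' \in P$, a complement $N(p') \subseteq M(p')$ to the copy of $k_I(p')$ inside $M(p')$ whose union assembles into a subfunctor of $M$. Stalkwise complements exist because we work over a field, but they do not automatically respect the structure maps of $M$, so they have to be corrected via a transfinite induction; the extremality of $I$ is precisely what prevents an obstruction from arising at each step, and pointwise finite dimensionality keeps each correction finite. Once a single summand has been split off, the Zorn's lemma iteration mentioned above concludes the proof.
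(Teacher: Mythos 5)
The paper does not prove this statement at all: existence is delegated to Crawley-Boevey \cite{CB13}, Thm.~1.1, and uniqueness to the Krull--Schmidt--Azumaya theorem \cite{AF92}, Thm.~12.6. So there is no proof in the paper to compare against; what you are doing is attempting to reconstruct the argument of \cite{CB13}. Your uniqueness paragraph matches the route the paper cites (local endomorphism rings plus Krull--Schmidt--Azumaya) and is fine.

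Your existence sketch, however, is not the route Crawley-Boevey actually takes, and as written it has a real gap. Crawley-Boevey does not split off interval summands one at a time and then invoke Zorn's lemma; he constructs the decomposition globally, by introducing functorial filtrations of each stalk $M(p)$ indexed by cuts of $P$ (from both sides), proving a distributivity/compatibility lemma for these two chains of subspaces, and deducing a simultaneous functorial splitting. The step you describe as the ``technical core''---promoting a cyclic interval submodule to a direct summand by choosing stalkwise complements and correcting them by transfinite induction---is precisely what is \emph{not} done in \cite{CB13}, and you give no argument that it can be carried out: there is no obvious finiteness on $P$ to drive a transfinite induction, and it is not clear what ``extremality of $I$'' means precisely or why it kills the obstruction at each stage. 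Until that step is supplied, your existence argument is an outline of a hope rather than a proof. (The Zorn step you use to finish is less worrying in the pointwise finite-dimensional setting, since one can arrange decreasing complements along a chain and take stalkwise intersections, but it is downstream of the unproved core.) If you want to follow the paper's citation, the right move is to reproduce Crawley-Boevey's cut-filtration/distributivity argument rather than a summand-at-a-time induction.
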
 

It is easy to characterize morphisms between interval modules. 
Let $I, J \subseteq P$ be two intervals. 
We say that $I$ has \emph{good intersection} with $J$ 
if $I \cap J \neq \emptyset$, and they satisfy that, for all $p \in I$, there exists $p' \in J$ 
such that $p' \succcurlyeq p$, and for all $p' \in J$ there exists $p \in I$ such that $p' \succcurlyeq p$. 
We note that the order of the intervals in the statement of the definition is important. 
However, we may also say that $I$ and $J$ have good intersection, if this causes no confusion. 
The following result is direct. 
\begin{fact}
\label{fact:1}
Given two interval modules $k_{I}$ and $k_{J}$, we have a nonzero morphism from $k_{I}$ to $k_{J}$ if and only if $I$ and $J$ have good intersection, 
in which case all the homomorphism from the former to the latter persistence modules 
are given by a multiplicative factor of the morphism $\phi_{I,J} : k_{I} \rightarrow k_{J}$
defined as $\phi_{I,J}(p) = \mathrm{id}_{k}$, if $p \in I \cap J$, and $\phi_{I,J}(p) = 0$ otherwise.
\end{fact}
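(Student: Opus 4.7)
The plan is to reduce everything to naturality checks on components. For any morphism $f \colon k_{I} \to k_{J}$ and any $p \in P$, the map $f_{p}$ is forced to be zero unless $p \in I \cap J$, in which case it is multiplication by some scalar $\lambda_{p} \in k$; so the whole question boils down to determining when such scalars can be chosen compatibly with the structure maps, and how many such choices exist.

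For the \emph{if} direction I would verify that $\phi_{I,J}$ is a well-defined natural transformation by performing a case analysis on each square induced by $\omega^{p'}_{p}$, according to which of the four sets $I \cap J$, $I \setminus J$, $J \setminus I$, $P \setminus (I \cup J)$ contains $p$ and $p'$. All but two of the sixteen combinations commute trivially; the potentially problematic configurations are (i) $p \in I \cap J$ with $p' \in J \setminus I$, and (ii) $p \in I \setminus J$ with $p' \in I \cap J$. I expect condition (c) of good intersection to eliminate (i): it supplies some $p_{0} \in I$ with $p_{0} \preccurlyeq p'$, and combined with $p' \preccurlyeq p \in I$, the interval property of $I$ forces $p' \in I$, a contradiction. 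Symmetrically, condition (b) applied to $p \in I$ produces some $q \in J$ with $q \succcurlyeq p$, and together with $p \succcurlyeq p' \in J$ the interval property of $J$ forces $p \in J$, ruling out (ii).

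For the converse, I would pick a nonzero $f$ together with some $p_{0} \in I \cap J$ satisfying $\lambda_{p_{0}} \neq 0$, which already gives $I \cap J \neq \emptyset$. Since $P$ is totally ordered, I can compare any $p \in I$ with $p_{0}$: if $p \preccurlyeq p_{0}$ then $p_{0} \in J$ itself witnesses (b), while if $p \succcurlyeq p_{0}$ then naturality on the square for $\omega^{p_{0}}_{p}$ gives the equation $\lambda_{p_{0}} \cdot \mathrm{id}_{k} = k_{J}(\omega^{p_{0}}_{p}) \circ f_{p}$, which can only hold if $p \in J$, otherwise the right-hand side vanishes. Condition (c) follows from the dual argument applied to $\omega^{p'}_{p_{0}}$ for $p' \in J$ with $p' \preccurlyeq p_{0}$.

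The last step is to classify morphisms: for any $p, p' \in I \cap J$ with $p \succcurlyeq p'$, naturality yields $\lambda_{p'} = \lambda_{p}$ because both structure maps of $k_{I}$ and $k_{J}$ are the identity on $k$; since $I \cap J$ is an interval in the totally ordered set $P$, these scalars coincide, so $f = \lambda \cdot \phi_{I,J}$ for a unique $\lambda \in k$. The only real subtlety is making sure that conditions (b) and (c) are assigned to their correct roles in the case analysis for the forward direction; everything else is routine bookkeeping.
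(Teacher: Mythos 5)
Your proof is correct. The paper offers no argument for this Fact (it simply declares it ``direct''), and your componentwise naturality check is the intended reading: you have located exactly the two problematic squares, correctly matched the two halves of the good-intersection condition to rule them out, and the converse and the one-dimensionality of $\Hom(k_I,k_J)$ follow just as you indicate.
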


For a totally ordered set $P$, we recall that $\hat{P}$ is 
the poset given by $P \sqcup \{  - \infty, + \hskip -0.4mm \infty \}$, where we set $- \infty \prec p \prec + \hskip -0.4mm \infty$, 
for all $p \in P$, and the elements of $P$ inside $\hat{P}$ are ordered as in $P$. 
We say that a filtration over a totally ordered set $P$ 
is \emph{continuous on the left (resp., right)} if there exists a 
collection of intervals $\{ K_{a} \}_{a \in A}$ of $P$ 
such that each $K_{a}$ is of the form 
$\{ p \in P : s_{a} \prec p \preccurlyeq t_{a} \}$ (resp., $\{ p \in P : s_{a} \preccurlyeq p \prec t_{a} \}$), for some 
$s_{a}, t_{a} \in \hat{P}$ such that $s_{a} \prec t_{a}$, 
and satisfies that $K_{a} \cap K_{a'} = \emptyset$ for all $a \neq a'$ in $A$, 
and $\cup_{a \in A} K_{a} = P$, and the filtration fulfils that $F^{p}C = F^{p'}C$, 
if $p, p' \in K_{a}$, for some $a \in A$. 
All these definitions apply to increasing filtrations of homological dg modules, topological spaces or simplicial spaces as well. 
In particular, note that the Vietoris-Rips complex in Remark \ref{remark:vr} is continuous on the left. 

\begin{remark}
\label{remark:dualint}
Note that the dual of a left persistence module of the form $\oplus_{a \in A} k_{I_{a}}$, for a family of intervals $\{ I_{a} \}_{a \in A}$, 
is a right persistence module $\oplus_{a \in A} k_{I_{a}}^{*}$, where $k_{I}^{*}$ is the corresponding interval right module. 
We could also have chosen to write the latter as $k_{I}$, but we should indicate that the (obvious) right action is intended. 
\end{remark}

\subsubsection{\texorpdfstring{Barcodes}{Barcodes}} 
\label{subsubsection:bar}

We recall that a \emph{multiset (or bag)} $\mathcal{S}$ (see \cite{Sta04}, Ch. 1, Section 2) consists of a set $S$ and a multiplicity map $v_{S} : S \rightarrow (\NN_{0} \sqcup \{ +\hskip -0.4mm \infty \})$. 
The value $v_{S}(s)$ is called the \emph{multiplicity of $s$}, and the set $| \mathcal{S} |$ formed by the elements $s$ of $S$ such that $v_{S}(s) \neq 0$ is called the \emph{support of $\mathcal{S}$}. 
We shall regard the codomain $\NN_{0} \sqcup \{ +\hskip -0.4mm \infty \}$ of the multiplicity map as a totally ordered set, with the usual order of $\NN_{0}$ and such that $n < + \hskip -0.4mm \infty$, 
for all $n \in \NN_{0}$, and we shall denote the set of all multisets defined on a fixed set $S$ by $\operatorname{Multi}(S)$. 
We may see a set $S$ as a multiset for the multiplicity map $v_{S} : S \rightarrow (\NN_{0} \sqcup \{ +\hskip -0.4mm \infty \})$ given by the constant function $s \mapsto 1$, for all $s \in S$. 
Given two multisets $\mathcal{S} = (S,v_{S}) $ and $\mathcal{S}' = (S',v_{S'})$, we say that they are \emph{equivalent} if there exists a bijection $f : |\mathcal{S}| \rightarrow |\mathcal{S}'|$ such that 
$v_{S'} \circ f = v_{S}|_{|\mathcal{S}|}$. 
Moreover, a multiset $\mathcal{S}$ is called \emph{finite} if the image of $v_{S}$ lies in $\NN_{0}$ and $\sum_{s \in v^{-1}(\NN)} v(s)$ is finite. 
The latter number is called the \emph{cardinality (size, or number of elements)} of the finite multiset. 
If the multiset is not finite, we say that its cardinality is infinite. 
Let $\mathcal{S} = (S,v_{S})$ and $\mathcal{S}' = (S',v_{S'})$ be two multisets. 
Take $*$ an element disjoint to $S$ and $S'$, and consider the disjoint unions $\hat{S} = S \sqcup \{ * \}$ and $\hat{S}' = S' \sqcup \{ * \}$. 
A \emph{(partial) matching} between $\mathcal{S}$ and $\mathcal{S}'$ is given by a multiset $\mathcal{P}$ defined on the set 
$(\hat{S} \times \hat{S}') \setminus \{ (*,*) \}$ with multiplicity $v_{P}$ satisfying that, for each $s \in S$, we have that  
\[     \sum_{\hat{s}' \in \hat{S}'} v_{P}(s,\hat{s}') = v_{S}(s),     \]
and for each $s' \in S'$, we have that
\[     \sum_{\hat{s} \in \hat{S}} v_{P}(\hat{s},s') = v_{S'}(s').     \]
We shall denote by $\operatorname{PM}(\mathcal{S},\mathcal{S}')$ the set of partial matchings between $\mathcal{S}$ and $\mathcal{S}'$. 
Let us define
\[     \mathcal{I}_{P} = \{ I : \text{ $I \subseteq P$ is an interval} \}.      \] 
A multiset whose underlying set is $\mathcal{I}_{P}$ would be called a 
\emph{barcode of $P$}. 

Given now any pointwise finite dimensional persistence module $H$, by Theorem \ref{theorem:cb}, we may decompose it as a direct sum of interval modules. 
For every interval $I \subseteq P$, let $\mu_{I}(H)$ be the multiplicity of the interval module $k_{I}$ in 
this decomposition, \textit{i.e} the cardinality of the set of interval modules appearing in the (or any) decomposition of $H$ into interval modules which are isomorphic to $k_{I}$. 
We may also be interested in the case that $H$ is not necessarily pointwise finite dimensional, 
but it has a decomposition as in Theorem \ref{theorem:cb} (which, by the Krull-Schmidt-Azumaya, is thus unique up to 
permutation of the factors. See \cite{AF92}, Thm. 12.6). 
So, we allow the mentioned cardinality to be infinite and we set in those cases $\mu_{I}(H) = + \hskip -0.4mm \infty$. 
In any case, the \emph{barcode associated to $H$} has as underlying set $\mathcal{I}_{P}$ 
together with the map $v_{H} : \mathcal{I}_{P} \rightarrow (\NN_{0} \sqcup \{ +\hskip -0.4mm \infty \})$ defined by $v_{H}(I) = \mu_{I}(H)$. 
The associated multiset to $H^{n}(\mathcal{D})$ (provided it is pointwise finite dimensional) 
is called the \emph{barcode of $n$-th type associated to the filtered complex $C$}. 
We shall denote in this case the corresponding multiplicity number by $\mu^{n}_{I}(C)$, or just $\mu^{n}_{I}$ if the filtered dg module $C$ is clear from the context. 
If the interval $I$ is of the form $\{ p \in P : p'' \succcurlyeq p \succ p' \}$, for some $p'' \succ p'$ in $P$, 
we shall even denote the previous multiplicity by $\mu^{n}_{p'',p'}(C)$ (or just $\mu^{n}_{p'',p'}$), 
and if it is of the form $\{ p \in P : p'' \succcurlyeq p \}$, for some 
$p'' \in P$, then we shall denote the corresponding multiplicity by $\mu^{n}_{p''}(C)$ (or just $\mu^{n}_{p''}$)

In order to put all these barcodes of $n$-th type together we shall do the following construction. 
We consider the set 
\[     \mathcal{GI}_{P} = \{ (n, I) : \text{ $n \in \ZZ$ and $I \subseteq P$ is an interval} \},     \] 
which we are going to call the set of \emph{generalized intervals}. 
An element $(n,I)$ is a \emph{generalized interval of $n$-th type}. 
A \emph{generalized barcode} is an element of $\operatorname{Multi}(\mathcal{GI}_{P})$, \textit{i.e.} a multiset whose underlying set is $\mathcal{GI}_{P}$. 
To any filtered dg module $(C,d_{C})$ satisfying the same assumptions as in the previous paragraph we can assign a generalized barcode $\mathcal{S}(C)$, 
called the \emph{generalized barcode associated to $C$}, whose multiplicity map $v(C)$ sends $(n,I)$ to $\mu^{n}_{I}(C) = v_{H^{n}(\mathcal{D})}(I)$, 
where we recall that $\mathcal{D}$ is given by \eqref{eq:reescomplex}. 

A typical interpretation of these generalized barcodes in case the interval $I$ is of the form $\{ p \in P : p'' \succcurlyeq p \succ p' \}$, for some $p'' \succ p'$ in $P$, is that for each $n$ there is (a basis of) 
$\mu^{n}_{p'',p'}$ $n$-th cohomology classes which originate at stage $p''$ and persist till they extinguish at stage $p'$. 
If $I = \{ p \in P : p'' \succcurlyeq p \}$, for some 
$p'' \in P$, then for each $n$ there is (a basis of) 
$\mu^{n}_{p''}$ $n$-th cohomology classes which originate at stage $p''$ and are not extinguished.
The idea of persistent (co)homology is that (co)homology classes which persistent longer, \textit{i.e.} for which the interval $I$ is larger, are more meaningful than those persisting less.  

\begin{remark}
\label{remark:essfin}
The importance of the types of intervals considered in the previous paragraphs stems from the fact that any interval with nonzero multiplicity in the 
barcode associated to a filtered dg module with an essentially discrete and continuous on the left filtration is of the form prescribed there. 
Even more, there exists a discrete subset $P_{s} \subset P$ satisfying that, for any $p \in P$ there exists $p' \in P_{s}$ such that $F^{p}C = F^{p'}C$ and the possible intervals having nonzero multiplicity in the associated barcode 
are of the form $I = \{ p \in P : p'' \succcurlyeq p \}$, for some 
$p'' \in P_{s}$, 
or $\{ p \in P : p'' \succcurlyeq p \succ p' \}$, for some $p'' \succ p'$ in $P_{s}$. 
The proof of this result follows easily from Theorem \ref{theorem:cb}.  
\end{remark}

Suppose that the filtration of $C$ is essentially finite, continuous on the left and of locally finite dimensional cohomology, \textit{i.e.} $H^{n}(F^{p}C)$ is finite dimensional for all $n \in \ZZ$ and $p \in P$. 
Hence, the persistence module $H^{n}(\mathcal{D})$ is pointwise finitely dimensional 
and the barcode construction of the fourth previous paragraph applies. 
Let $P_{s} \subset P$ be a finite subposet as in Remark \ref{remark:essfin}.  
Then, all the intervals with nonzero multiplicity in the associated 
barcode are of the form $\{ p \in P : p'' \succcurlyeq p \succ p' \}$, for some $p'' \succ p'$ in $P_{s}$, 
or $\{ p \in P : p'' \succcurlyeq p \}$, for some $p'' \in P_{s}$. 
We recall the well-known fact that, under these assumptions, the barcodes and the dimensions of the persistent cohomology groups of a filtered complex $C$ contain in fact the ``same'' information. 
Indeed, let $\beta^{n}_{p,p'}$ denote the dimension of $PH^{n}_{p,p'}(C)$, and we shall denote in what 
follows $p + 1$ the immediate successor of $p \in P_{s}$, if $p \neq \mathrm{Max}(P_{s})$, and 
$p+1 = \mathrm{Max}(P_{s})$ if $p = \mathrm{Max}(P_{s})$. 
It is then straightforward to prove that 
\begin{equation}
\label{eq:mul1}
     \mu^{n}_{p,p'} = \beta^{n}_{p+1,p'} - \beta^{n}_{p,p'} + \beta^{n}_{p,p'+1} - \beta^{n}_{p+1,p'+1},     
\end{equation}
and 
\begin{equation}
\label{eq:mul2}
     \mu^{n}_{p} = \beta^{n}_{p,\mathrm{min}(P_{s})} - \beta^{n}_{p+1,\mathrm{min}(P_{s})}.
\end{equation}
These set of identities can be easily inverted, giving  
\begin{equation}
\label{eq:mulinv}
     \beta^{n}_{p,p'} = \sum_{p'' \succcurlyeq p} \Big( \mu^{n}_{p''} + \sum_{p''' \prec p'} \mu^{n}_{p'',p'''} \Big),     
\end{equation}
for $n \in \ZZ$ and $p, p' \in P$ such that $p \succcurlyeq p'$. 
The analogous results hold if we had assumed that the filtrations were continuous on the right (and the other corresponding hypotheses) instead. 

\subsubsection{\texorpdfstring{Relationship between persistent homology and cohomology}{Relationship between persistent homology and cohomology}} 
\label{subsubsection:homocohomo}

The persistent homology groups of an increasingly filtered homological dg module and the associated barcodes are defined \emph{mutatis mutandi}. 
The homological situation naturally appears in the literature for a topological space $X$ provided with an increasing filtration $\{ X_{p} \}_{p \in P}$ of topological subspaces. 
In that case the filtered homological dg module is the complex of singular chain $C_{\bullet}(X,k)$ provided with an increasing filtration $\{ F_{p}C_{\bullet}(X,k) \}_{p \in P}$, 
where $F_{p}C_{\bullet}(X,k) = C_{\bullet}(X_{p},k)$. 
We could also consider instead a simplicial set $\mathcal{X}$, provided with an increasing filtration $\{ \mathcal{X}_{p} \}_{p \in P}$ of simplicial sets, 
and we regard thus the complex $C_{\bullet}(\mathcal{X},k)$ of normalized chains computing the simplicial homology, provided with an increasing filtration $\{ F_{p}C_{\bullet}(\mathcal{X},k) \}_{p \in P}$, 
where $F_{p}C_{\bullet}(\mathcal{X},k) = C_{\bullet}(\mathcal{X}_{p},k)$ (see \cite{Wei94}, Ch. 8).  
The corresponding cohomological complexes were mentioned in the paragraph after Remark \ref{remark:hoco}. 

Now, let $C = C_{\bullet}$ be a homological dg module provided with an increasing $P$-filtration. 
Its graded dual $C^{\#}$ is a cohomological dg module with $(C^{\#})^{n} = C_{n}^{*}$, 
for all $n \in \ZZ$, where $(\place)^{*}$ denotes the usual dual functor.
It also has a decreasing $P$-filtration $\{ F^{p}(C^{\#}) \}_{p \in P}$, 
where $F^{p}(C^{\#})$ is the kernel of the map $C^{\#} \rightarrow (F_{p}C)^{\#}$ given by the graded dual of the canonical inclusion map $F_{p}C \rightarrow C$, for $p \in P$. 
This in particular applies to the situation we recalled in the previous paragraph, since $C^{\bullet}(X,k) = (C_{\bullet}(X,k))^{\#}$ or $C^{\bullet}(\mathcal{X},k) = (C_{\bullet}(\mathcal{X},k))^{\#}$ follows the same patterns. 
Given $p' \preccurlyeq p$ we further have the commutative diagram
\[
\xymatrix
{
0
\ar[r]
&
F^{p}C^{\#}
\ar[r]
\ar[d]^{i^{p,p'}}
&
C^{\#}
\ar[r]
\ar@{=}[d]
&
(F_{p}C)^{\#}
\ar[r]
\ar[d]^{i_{p',p}^{\#}}
&
0
\\
0
\ar[r]
&
F^{p'}C^{\#}
\ar[r]
&
C^{\#}
\ar[r]
&
(F_{p'}C)^{\#}
\ar[r]
&
0
}
\]
where $i^{p,p'} : F^{p}C^{\#} \rightarrow F^{p'}C^{\#}$ is the canonical inclusion induced (precisely to make the previous diagram commute) by the canonical inclusion 
$i_{p',p} : F_{p'}C \rightarrow F_{p}C$. 
Note that if the filtration on $C$ is continuous on the left (or right), the same happens for $C^{\#}$.
By taking cohomology and using that it commutes with taking graded duals, since the latter are exact functors, we get 
\[
\xymatrix@C-10pt
{
\dots
\ar[r]
&
H^{n}(F^{p}C^{\#})
\ar[r]
\ar[d]^{H^{n}(i^{p,p'})}
&
H^{n}(C^{\#})
\ar[r]
\ar@{=}[d]
&
(H_{n}(F_{p}C))^{*}
\ar[r]
\ar[d]^{H_{n}(i_{p',p})^{*}}
&
H^{n+1}(F^{p}C^{\#})
\ar[r]
\ar[d]^{H^{n+1}(i^{p,p'})}
&
\dots
\\
\dots
\ar[r]
&
H^{n}(F^{p'}C^{\#})
\ar[r]
&
H^{n}(C^{\#})
\ar[r]
&
(H_{n}(F_{p'}C))^{*}
\ar[r]
&
H^{n+1}(F^{p'}C^{\#})
\ar[r]
&
\dots
}
\]
This in particular tells us that we have a long exact sequence of (left) persistence modules 
\[     \dots \rightarrow H^{n}(\mathcal{D}) \overset{f^{n}}{\rightarrow} H^{n}(C^{\#}) \otimes k_{P} \overset{g^{n}}{\rightarrow} (H_{n}(\mathcal{D}'))^{\#} \overset{h^{n}}{\rightarrow} H^{n+1}(\mathcal{D}) \overset{f^{n+1}}{\rightarrow} \dots     \]
where $H^{n}(C^{\#}) \otimes k_{P}$ is the persistence module with the action coming from the usual multiplication on the second factor, 
and $\mathcal{D}' = \oplus_{p \in P} F_{p}C$ is the corresponding space 
whose $n$-th homology group gives the $n$-th persistent homology of $C_{\bullet}$. 

Suppose now that $H_{n}(F_{p}C)$ (or $H^{n}(F^{p}C^{\#})$) is finite dimensional, for all $n \in \ZZ$ and $p \in P$, 
and that the previous filtration of $H_{n}(C)$ (or $H^{n}(C^{\#})$) is bounded, for all $n \in \ZZ$.  
The locally finite dimensional hypothesis implies that the persistence modules considered before are pointwise finite dimensional. 
In particular, we may split 
$H^{n}(\mathcal{D})$ as a direct sum 
\[     H^{n}(\mathcal{D}) \simeq \bigoplus_{a \in A^{n}} k_{I_{a}} \oplus \bigoplus_{b \in B^{n}} k_{I_{b}},     \]
where the set of indices $A^{n}$ contains all intervals that are unbounded below, and $B^{n}$ all intervals that are bounded below. 
On the one hand, by Fact \ref{fact:1} we see that the restriction of $f^{n}$ to any direct summand of $H^{n}(\mathcal{D})$ isomorphic to $k_{I}$
such that $I$ is bounded below is zero. 
On the other hand, the fact that the filtration on $C^{\bullet}$ is exhaustive tells us that the restriction of $f^{n}$ to 
$K = \oplus_{a \in A^{n}} k_{I_{a}}$ is injective, for the persistence module $H_{n}((\mathcal{D}')^{\#})$ evaluated at sufficiently low $p$ 
should vanish. 
Indeed, let us assume that $m \in K(p)$, for some $p \in P$, satisfies that $f^{n}(p)(m) = 0$. 
Since $m \in K(p)$, there exists a finite set of indices $\bar{A} \subseteq A^{n}$ such that $m = \sum_{a \in \bar{A}} m_{a}$, 
where $m_{a} \in k_{I_{a}}(p)$ is nonzero. 
Taking $p' \preccurlyeq p$ sufficiently low, so that $H^{n}(F^{p'}(C^{\#})) = H^{n}(C^{\#})$, we get that $f^{n}|_{K}(p')$ is injective. 
Since $\omega^{p'}_{p}.m$ is in its kernel, we get that $\omega^{p'}_{p}.m = 0$. 
However, taking into account that $K$ has no torsion, for each $I_{a}$ is unbounded below, we conclude that $m = 0$. 

Consider now the decomposition 
\[     (H_{n}(\mathcal{D}'))^{\#} \simeq \bigoplus_{a \in A'_{n}} k_{J_{a}} \oplus \bigoplus_{b \in B'_{n}} k_{J_{b}},     \]
where the set of indices $A'_{n}$ contains all intervals that are unbounded above, and $B'_{n}$ all intervals that are bounded above. 
The Fact \ref{fact:1} tells us that the image of $g^{n}$ lies in the direct summand $K' = \bigoplus_{a \in A'_{n}} k_{J_{a}}$. 
Moreover, taking into account that the filtration on $H^{\bullet}(C^{\#})$ is bounded, we see that $K'$ 
lies in the image of $g^{n}$. 
Indeed, take $m \in k_{J_{a}}(p)$, for some $a \in A'_{n}$ and $p \in J_{a}$, and 
define $(g')^{n}$ to be the composition of $g^{n}$ with the canonical projection from $(H_{n}(\mathcal{D}'))^{\#}$ onto 
$K'$. 
Choosing if necessary $p' \succcurlyeq p$ large enough, so that $H^{n}((F_{p'}C)^{\#})) = H^{n}(C^{\#})$, we see that 
$(g')^{n}(p')$ is surjective. 
By the structure of the module $K'$ there exists $m' \in K'(p')$ such that $\omega^{p}_{p'}.m' = m$. 
The surjectivity of $(g')^{n}(p')$ tells us that there exists $n' \in H^{n}(C^{\#}) \otimes k_{P}(p')$ such that $g^{n}(p')(n') = m'$, 
which in turn implies that $g^{n}(p)(\omega^{p}_{p'}.n') = \omega^{p}_{p'}.g^{n}(p')(n') = \omega^{p}_{p'}.m' = m$. 

As a consequence of the two previous paragraphs we get that $h^{n}$ induces an isomorphism 
\[     \bigoplus_{b \in B^{n}} k_{I_{b}} \simeq \bigoplus_{b \in B'_{n}} k_{J_{b}}.     \]
By the Krull-Schmidt-Azumaya theorem (see \cite{AF92}, Thm. 12.6), these decompositions are equivalent. 
In particular, since there are no unbounded below intervals on the right and no unbounded above on the left, 
there are only bounded intervals in the previous decompositions. 
Furthermore, we also obtained a short exact sequence 
\begin{equation}
\label{eq:seq}
0 \rightarrow \bigoplus_{a \in A^{n}} k_{I_{a}} \rightarrow H^{n}(C^{\#}) \otimes k_{P} \rightarrow \bigoplus_{a \in A'_{n}} k_{J_{a}} \rightarrow 0     
\end{equation}     
of persistence modules. 
We note the easy fact that a decomposition of a pointwise persistence module $M$ with indecomposables given by interval modules, where the intervals are only unbounded below (resp., above) is completely determined by the dimension function of $M$. 
Moreover, since the dimension function is \emph{additive} (sometimes called an \emph{Euler-Poincar\'e map}), \textit{i.e.} for any short exact sequence of persistence modules of the form 
\[     0 \rightarrow M' \rightarrow M \rightarrow M'' \rightarrow 0,     \]
we have that $\dim M(p) = \dim M'(p) + \dim M''(p)$, 
for all $p \in P$, we see from \eqref{eq:seq} that there is a bijection between the family of intervals appearing in the 
decomposition of the leftmost term and the family of intervals appearing in the rightmost one given by $I \mapsto P \setminus I$. 
We have thus obtained a (fairly) simple proof of the following result. 
\begin{proposition}
\label{proposition:dual}
Let $C$ be a homological filtered dg module of locally finite dimensional homology, such that the induced filtration 
on each homology group $H_{n}(C)$ is bounded.  
Then, there are no unbounded below (resp., above) intervals of nonzero multiplicity in the barcode of persistent (resp., co)homology, 
the restriction of the barcode of $(n+1)$-th type for the persistent cohomology of $C^{\#}$ to the subset of 
bounded intervals coincides with the restriction of the barcode of $n$-th type for the persistent homology of $C$ to the same subset, 
and there is an equivalence between the restriction of the barcode of $n$-th type for the persistent cohomology of $C^{\#}$ to the subset of 
unbounded below intervals different from $P$ and the restriction of the barcode of $n$-th type for the persistent homology of $C$ to the subset of 
unbounded above intervals different from $P$ given by sending a bounded below interval $I$ to $P \setminus I$.  
\end{proposition}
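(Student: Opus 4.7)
My approach is to deduce the three assertions from a single long exact sequence of persistence modules obtained by graded duality. I would start from the short exact sequences of dg modules
\begin{equation*}
0 \to F^{p}C^{\#} \to C^{\#} \to (F_{p}C)^{\#} \to 0,
\end{equation*}
which are exact because $(\place)^{\#}$ is exact over a field, and assemble their long cohomology sequences over $p \in P$ into the long exact sequence of left persistence modules
\begin{equation*}
\dots \to H^{n}(\mathcal{D}) \overset{f^{n}}{\to} H^{n}(C^{\#}) \otimes k_{P} \overset{g^{n}}{\to} (H_{n}(\mathcal{D}'))^{\#} \overset{h^{n}}{\to} H^{n+1}(\mathcal{D}) \to \dots ,
\end{equation*}
whose middle term is constant in $p$. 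The boundedness of the induced filtration on each $H^{n}(C^{\#})$ forces $F^{p}C^{\#}$ to vanish for sufficiently large $p$ and to coincide with $C^{\#}$ for sufficiently small $p$, from which I would immediately deduce the first assertion: the persistence modules $H^{n}(\mathcal{D})$ cannot have intervals unbounded above, and dually $H_{n}(\mathcal{D}')$ has no intervals unbounded below.

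For the remaining assertions, I would apply Theorem \ref{theorem:cb} to decompose
\begin{equation*}
H^{n}(\mathcal{D}) \simeq \bigoplus_{a \in A^{n}} k_{I_{a}} \oplus \bigoplus_{b \in B^{n}} k_{I_{b}}, \qquad (H_{n}(\mathcal{D}'))^{\#} \simeq \bigoplus_{a \in A'_{n}} k_{J_{a}} \oplus \bigoplus_{b \in B'_{n}} k_{J_{b}},
\end{equation*}
where $A^{n}$ and $A'_{n}$ collect the intervals unbounded below and unbounded above, respectively, and $B^{n}$, $B'_{n}$ the remaining bounded-below or bounded-above ones. Since $H^{n}(C^{\#}) \otimes k_{P}$ is supported on all of $P$, Fact \ref{fact:1} forces $f^{n}$ to annihilate every bounded-below summand of its source and forces $\im(g^{n})$ to lie in the unbounded-above submodule $K' = \bigoplus_{a \in A'_{n}} k_{J_{a}}$. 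Exactness then implies that $h^{n}$ restricts to an isomorphism from the bounded-above part $\bigoplus_{b \in B'_{n}} k_{J_{b}}$ of $(H_{n}(\mathcal{D}'))^{\#}$ onto the bounded-below part of $H^{n+1}(\mathcal{D})$; combined with the first assertion, these parts are in fact the bounded barcodes, proving the second assertion via Krull--Schmidt--Azumaya.

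To establish the third assertion I would upgrade the previous observations into a short exact sequence
\begin{equation*}
0 \to K \to H^{n}(C^{\#}) \otimes k_{P} \to K' \to 0, \qquad K = \bigoplus_{a \in A^{n}} k_{I_{a}},
\end{equation*}
using exhaustiveness together with the torsion-freeness of $K$ to secure injectivity of $f^{n}|_{K}$ (choose $p'$ small enough that $H^{n}(F^{p'}C^{\#}) = H^{n}(C^{\#})$), and using boundedness at the top to secure surjectivity of $g^{n}$ onto $K'$. Since a persistence module whose summands are all unbounded-below (resp., unbounded-above) interval modules is determined up to isomorphism by its pointwise dimension function, and since dimension is additive along short exact sequences, the identity $\dim K(p) + \dim K'(p) = \dim H^{n}(C^{\#})$ for every $p \in P$ forces the desired bijection $I \leftrightarrow P \setminus I$ between the unbounded-below intervals $I \neq P$ and the unbounded-above intervals different from $P$. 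I expect the main obstacle to be this last dimension-counting step: one must carefully isolate the summand $k_{P}$ appearing on both sides whenever $H^{n}(C^{\#})$ is nonzero and verify that Fact \ref{fact:1} partitions the four types of intervals cleanly enough for the dimension identity to translate into a bijection of multisets.
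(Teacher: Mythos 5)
Your overall plan is the same as the paper's: form the long exact sequence of persistence modules from the graded-dual short exact sequences, decompose the outer terms via Theorem \ref{theorem:cb} into unbounded and bounded-at-one-end parts, use Fact \ref{fact:1} to locate the kernel of $f^{n}$ and the image of $g^{n}$, deduce an isomorphism via $h^{n}$ and Krull--Schmidt--Azumaya for the bounded part, and then extract the dimension identity from the short exact sequence $0 \to K \to H^{n}(C^{\#}) \otimes k_{P} \to K' \to 0$ for the unbounded part. That matches the paper step for step.

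The one place where you deviate is also the one place where your argument has a gap. You claim the first assertion follows immediately because \emph{boundedness of the induced filtration on each $H^{n}(C^{\#})$ forces $F^{p}C^{\#}$ to vanish for sufficiently large $p$ and to coincide with $C^{\#}$ for sufficiently small $p$}. That implication is false: the hypothesis controls only the images $F^{p}H^{n}(C^{\#}) = \im\bigl(H^{n}(F^{p}C^{\#}) \to H^{n}(C^{\#})\bigr)$, not the cochain groups $F^{p}C^{\#}$ themselves (nor even $H^{n}(F^{p}C^{\#})$). A filtration can be exhaustive and Hausdorff on cochains, with bounded induced filtration on cohomology, without any single $F^{p}C^{\#}$ being zero or all of $C^{\#}$. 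Moreover, you use this first assertion in your second paragraph to upgrade the bounded-at-one-end part to the genuinely bounded barcode, so your second assertion inherits the gap. The paper instead derives the absence of unbounded intervals of the wrong type \emph{as a consequence} of the Krull--Schmidt--Azumaya isomorphism you establish in the second paragraph: since the same multiset must appear once as bounded-below intervals and once as bounded-above intervals, every interval in it is in fact bounded. If you reorganize your argument to derive the first assertion from that isomorphism (rather than attempting the direct shortcut), the proof becomes correct and coincides with the paper's.
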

This property has been observed for finite dimensional complexes $C$ (given as those computing cellular homology of a finite CW complex), 
provided with the filtration induced by the cellular decomposition, in \cite{dSMVJ11bis}, Prop. 2.3 and 2.4. 
Our proof is more general, and in our opinion much more direct. 

\subsubsection{\texorpdfstring{Distances between barcodes}{Distances between barcodes}} 
\label{subsubsection:dist}

The story about persistent cohomology does not quite end at this point, but it barely begins, because one is further interested in comparing two structure of barcodes of $n$-th type 
coming from two different filtered dg modules $(C,d_{C})$ and $(C',d_{C'})$, each of which is filtered over $\RR$ (with the induced order). 
We are going to consider more generally filtrations over any totally ordered set $P$ such that the order topology is separable, 
provided with a finite metric such that the order topology coincides with that induced by the metric, 
and the metric satisfies the \emph{ternary (betweenness) condition} given by $d_{P}(p,p'') = d_{P}(p,p') + d_{P}(p',p'')$, 
for all $p, p', p'' \in P$ satisfying that $p \preccurlyeq p' \preccurlyeq p''$. 
As explained in the paragraph previous to Remark \ref{remark:vr}, one deals in general with the case of a topological space or the simplicial set embedded in $\RR^{n}$, thus metric spaces, 
and the filtration is constructed by some more or less established methods using the metric (using the Vietoris-Rips complex, or the \v{C}ech complex, etc.). 
Also, as explained in Subsubsection \ref{subsubsection:homocohomo}, 
if $X$ denotes a topological space embedded in $\RR^{n}$ and $C = C_{\bullet}$ is the (homological) dg module associated to $X$ using a particular method of topological data analysis, 
we will consider the cohomological cochain complex $C^{\#}$
together with the filtration $F^{p}(C^{\#})$ given by the kernel of the map 
$C^{\#} \rightarrow (G_{p}C)^{\#}$, where $G_{\epsilon}C$ 
denotes the usual $\RR$-filtration of the corresponding complex considered in usual topological data analysis 
(see Remark \ref{remark:vr} for the case of the Vietoris-Rips complex).  
Furthermore, in the majority of the cases this filtration will be in fact essentially finite and continuous on the left 
(\textit{e.g.}, the Vietoris-Rips filtration $\operatorname{VR}_{\epsilon}(X)$ of a finite metric space $X$, 
see Remark \ref{remark:vr}). 

Now, keeping in mind the same examples for the filtered dg modules we explained at the previous paragraph, 
the general principle of comparison is (by paraphrasing S. Weinberger in \cite{We11}, p. 38), roughly speaking, that two filtered dg modules 
should be ``similar'' if, for each $n \in \ZZ$, the multiplicities of the barcodes of $n$-th type are ``similar''. 
This comparison may be achieved by different methods, one of the most typical is by endowing the collection of barcodes with a \emph{pseudometric} 
(or \emph{semimetric}). 
We shall use the word \emph{distance} or even \emph{metric} on the corresponding set $S$, 
instead of pseudometric or semimetric, and we may allow it to be infinite, \textit{i.e.} a distance is 
a map $d : S \times S \rightarrow \RR_{\geq 0} \sqcup \{ + \hskip -0.4mm \infty \}$ satisfying that $d(s,s') = d(s',s)$, and 
$d(s,s'') \leq d(s,s') + d(s',s'')$, for all $s, s ', s'' \in S$, where we understand that $+ \hskip -0.4mm \infty + r = 
r + (+ \hskip -0.4mm \infty) = + \hskip -0.4mm \infty$, for all $r \in \RR_{\geq 0} \sqcup \{ + \hskip -0.4mm \infty \}$ 
(see \cite{BBI01}, Def. 1.1.1 and 1.1.4. However, we warn the reader that this terminology is different from the one used by other authors). 
If the image of the metric is included in $\RR_{\geq 0}$, we say it is \emph{finite}, and if it further 
satisfies that $d(s,s') = 0$ implies $s = s'$, we will say it is \emph{strict}.
Again, there may be different choices: we shall however deal with one, called the \emph{bottleneck (or matching) distance} 
(see \cite{DD13}, p. 54, p. 360). 
Our exposition more or less follows that in \cite{BGMP14}, Def. 2.7, with some minor variations. 

Assume for this subsubsection that $P$ is not only a totally ordered set such that the order topology is separable, 
but it is also provided with a finite metric $d_{P}$ such that the order topology coincides with the one induced by the metric, 
and the metric satisfies the ternary betweenness property given by $d_{P}(p,p'') = d_{P}(p,p') + d_{P}(p',p'')$, 
for all $p, p', p'' \in P$ satisfying that $p \preccurlyeq p' \preccurlyeq p''$.  
Since the order topology is easily seen to be Hausdorff, we see that $d_{P}$ is a strict metric. 
We recall that $\mathcal{I}_{P}$ is the set of all intervals of $P$. 
Given $I \in \mathcal{P}$, we denote by $\ell(I)$ the supremum of the set 
\[     \{ d_{P}(p',p) : \text{$p', p \in I$ such that $p' \preccurlyeq p$} \}.     \]
More generally, if $S \subseteq P$ is nonempty, define $\ell(S)$ as the supremum of the set 
\[     \{ \ell(I) : \text{$I \in \mathcal{I}_{P}$ and $I \subseteq S$}\}.     \]
Since any point is an interval, the previous set is nonempty. 
Finally, we define $\ell(\emptyset) = 0$. 

Consider now the set $\mathcal{I}_{P} \sqcup \{ * \}$ and define a metric $\tilde{d}$ on it as follows. 
For two intervals $I$ and $J$ in $P$, take $K$ the intersection of all intervals containing $I$ and $J$. 
This is clearly and interval of $P$. 
Define
\[     \tilde{d}(I,J) = \operatorname{max}(\ell(K \setminus I), \ell(K \setminus J)),    \]
together with
\[     \tilde{d}(I, *)  = \tilde{d}(*, I) = \ell(I)/2,  \hskip 0.5cm \text{ and }\hskip 0.5cm \tilde{d}(*,*) = 0,   \]
for all $I$ in $\mathcal{I}_{P}$. 
It is straightforward but tedious to prove that this satisfies the axioms of a distance function.
Given now two barcodes $\mathcal{S} = (S,v)$ and $\mathcal{S}' = (S,v')$ on $P$
the bottleneck distance between them 
in $\operatorname{Multi}(\mathcal{I}_{P})$ is defined as
\[     d_{\mathcal{B}}(\mathcal{S},\mathcal{S}) = \underset{\mathcal{P} \in \operatorname{PM}(\mathcal{S},\mathcal{S}')}{\operatorname{inf}} 
\underset{(\hat{s}_{1},\hat{s}_{2}) \in \mathcal{|P|}}{\operatorname{sup}} \tilde{d}(\hat{s}_{1},\hat{s}_{2}),     \]
where $\hat{s}_{1}$ and $\hat{s}_{2}$ are elements of $\mathcal{I}_{P} \sqcup \{ * \}$, 
and we recall that $\operatorname{PM}(\mathcal{S},\mathcal{S}')$ denotes the set of partial matchings between $\mathcal{S}$ and $\mathcal{S}'$ recalled at the first paragraph of Subsubsection \ref{subsubsection:bar}. 
The proof that the previous expression is indeed a metric is standard, but it can also be deduced from our Proposition \ref{proposition:metric}. 

Let us now assume that our dg modules $C$ and $C'$ have an essentially discrete $P$-filtration 
which is continuous on the left. 
We shall spell out more explicitly what is the bottleneck metric between the barcodes of $n$-th type associated to them. 
Moreover, this alternative description will be useful in the sequel. 
We also suppose that their cohomologies $H^{\bullet}(\mathcal{D})$ and $H^{\bullet}(\mathcal{D}')$ of the respective Rees dg algebras 
are locally finite dimensional. 
Using the decomposition (given by Theorem \ref{theorem:cb}) of the $n$-th cohomology $H^{n}(\mathcal{D})$ of the Rees construction \eqref{eq:reescomplex} into interval persistence modules 
of the form described in Remark \ref{remark:essfin}. 
Let $P_{s}$ and $P'_{s}$ be subposets of $P$ satisfying the hypothesis of the previously mentioned remark. 
For each $n \in \ZZ$ and each interval of $P_{s}$, which we will take without loss of generality to be of the form 
$R_{p_{1},p_{2}} = \{ p \in P : p_{1} \succcurlyeq p \succ p_{2} \}$, for some $p_{1} \in P_{s}$ and $p_{2} \in P_{s} \sqcup \{- \infty\}$ 
such that $p_{1} \succ p_{2}$, we choose a set of cocycles 
\[     \{ z^{n,m}_{p_{1},p_{2}} : m \in \NN_{\leq \mu^{n}_{R_{p_{1},p_{2}}}} \} \subseteq F^{p_{1}}C^{n}     \] 
of $\mathcal{D}$, satisfying that the image of the $k$-submodule expanded by the set of their cohomology classes under the canonical projection 
\[     H^{n}(F^{p_{1}}C) \rightarrow \frac{H^{n}(F^{p_{1}}C)}{\mathrm{Im}(H^{n}(F^{p_{1}+1}C) \rightarrow H^{n}(F^{p_{1}}C))}     \] 
has dimension $\mu^{n}_{p_{1},p_{2}} = \mu^{n}_{R_{p_{1},p_{2}}}$, and that $\omega^{p_{2}}_{p_{1}}.z^{n,m}_{p_{1},p_{2}}$ is a coboundary, for every $m = 1, \dots, \mu^{n}_{p_{1},p_{2}}$. 
We recall that if $p \in P_{s}$, then $p + 1$ denotes its immediate successor in $P_{s}  \sqcup \{ + \hskip -0.4mm \infty\}$.
Let us call $\mathcal{B}^{n}$ the set formed by all $z^{n,m}_{p_{1},p_{2}}$ for 
$p_{1} \in P_{s}$, $p_{2} \in P_{s} \sqcup \{ - \infty \}$ such that $p_{1} \succ p_{2}$, and all positive integers $m \leq \mu^{n}_{p_{1},p_{2}}$.
The same applies to the other filtered dg module $(C',d_{C'})$. 
For every $n \in \ZZ$ and every interval $R'_{p'_{1},p'_{2}}$ defined analogously, 
where $p'_{1} \in P'_{s}$ and $p'_{2} \in P'_{s} \sqcup \{ - \infty \}$ satisfies that $p'_{1} \succ p'_{2}$, we shall denote the corresponding set of cocycles by
\[     \{ (z')^{n,m'}_{p'_{1},p'_{2}} : m' \in \NN_{\leq (\mu')^{n}_{R'_{p'_{1},p'_{2}}}} \} \subseteq F^{p'_{1}}(C')^{n},     \] 
and the union of these sets for all $p'_{1} \in P'_{s}$, $p'_{2} \in P'_{s} \sqcup \{ - \infty \}$ such that $p'_{1} \succ p'_{2}$ and all positive integers $m' \leq (\mu')^{n}_{R'_{p'_{1},p'_{2}}} = (\mu')^{n}_{p'_{1},p'_{2}}$ will be denoted by $(\mathcal{B}')^{n}$.

For fixed $n \in \ZZ$, we define the distance $d^{n}(z^{n,m}_{p_{1},p_{2}},z^{n,m'}_{p'_{1},p'_{2}}) = \tilde{d}(R_{p_{1},p_{2}},R'_{p'_{1},p'_{2}})$, for all $p_{1} \in P_{s}$, $p_{2} \in P_{s} \sqcup \{ - \infty \}$ such that $p_{1} \succ p_{2}$, $p'_{1} \in P'_{s}$, $p'_{2} \in P'_{s} \sqcup \{ - \infty \}$ such that $p'_{1} \succ p'_{2}$, 
and all positive integers $m \leq \mu^{n}_{p_{1},p_{2}}$ and $m' \leq (\mu')^{n}_{p'_{1},p'_{2}}$. 
We now define the distance between $\mathcal{B}^{n}$ and $(\mathcal{B}')^{n}$ as follows. 
Regarding the latter two as multisets, then the set $\operatorname{PM}(\mathcal{B}^{n}, (\mathcal{B}')^{n})$ of partial matchings coincides 
with the collection of subsets $\mathcal{P}$ of 
\[     \Big((\mathcal{B}^{n} \sqcup \{ * \}) \times ((\mathcal{B}')^{n} \sqcup \{ * \})\Big) \setminus \{ (*,*) \}     \] 
satisfying that, for any $z \in \mathcal{B}^{n}$ (resp., $z' \in (\mathcal{B}')^{n}$), the cardinality of the set given by $(\{ z \} \times ((\mathcal{B}')^{n} \sqcup \{ * \})) \cap \mathcal{P}$ 
(resp., $((\mathcal{B}^{n} \sqcup \{ * \}) \times \{ z' \}) \cap \mathcal{P}$) is exactly one. 
We also define the distances $d^{n}(z^{n,m}_{p_{1},p_{2}},*) = \tilde{d}(R_{p_{1},p_{2}},*)$ and $d^{n}(*,z^{n,m'}_{p'_{1},p'_{2}}) = \tilde{d}(*,R'_{p'_{1},p'_{2}})$. 
Then, the bottleneck metric between the barcodes $n$-th type associated to $C$ and $C'$ coincides with 
\[     \underset{\mathcal{P} \in \operatorname{PM}(\mathcal{B}^{n}, (\mathcal{B}')^{n})}{\operatorname{inf}} \underset{(z,z') \in \mathcal{P}}{\operatorname{sup}} d^{n}(z,z').     \]
We shall denote this number by $\partial_{\mathcal{B}}^{n}(H^{\bullet}(\mathcal{D}),H^{\bullet}(\mathcal{D}'))$. 
Furthermore, we set 
\begin{equation}
\label{eq:bm}
     \partial_{\mathcal{B}}(H^{\bullet}(\mathcal{D}),H^{\bullet}(\mathcal{D}')) = \operatorname{sup}_{n \in \ZZ} \partial_{\mathcal{B}}^{n}(H^{\bullet}(\mathcal{D}),H^{\bullet}(\mathcal{D}')),
\end{equation}
and call it the \emph{bottleneck distance between the (generalized barcodes associated to) filtered dg modules $C$ and $C'$}. 
We remark that it obviously satisfies the properties of a distance function. 
Roughly speaking, for computing the bottleneck distance between the barcodes of $n$-th type associated to $C$ and $C'$, 
one considers all the manners of (partially) pairing the sets of cocycles $\mathcal{B}^{n}$ and $(\mathcal{B}')^{n}$ and for such partial pairing one takes the maximum of the distances between the paired elements. 
The value of the bottleneck metric is just the infimum of all the numbers obtained before, when considering all the possible partial pairings. 

\subsection{\texorpdfstring{$A_{\infty}$-algebras and Kadeishvili's theorem}{A-infinity-algebras and Kadeishvili's theorem}} 
\label{subsection:kad}

The notion of $A_{\infty}$-algebra was introduced by J. Stasheff in \cite{Sta63} in his study of homotopy theory of loop spaces. 
We refer the reader to \cite{Prou11}, Chapitre 3, or \cite{LH03}, Chapitre 1, for standard references, even though we follow the sign and grading conventions explained in detail in \cite{Her14}, Subsection 2.2. 

From now on we shall assume that $(P , \succcurlyeq)$ is a poset provided with an associative and commutative product structure $\star : P \times P \rightarrow P$ 
satisfying the \emph{monotonicity} property, \textit{i.e.} if $p_{1} \succcurlyeq p'_{1}$ and $p_{2} \succcurlyeq p'_{2}$, for $p_{1}, p_{2}, p'_{1}, p'_{2} \in P$, then $p_{1} \star p_{2} \succcurlyeq p'_{1} \star p'_{2}$. 
Let $A$ be a graded module provided with an extra grading indexed by $P$ (called the \emph{Adams degree}), \textit{i.e.} $A = \oplus_{p \in P} \oplus_{n \in \ZZ} A^{p,n}$. 
We remark that our definition of Adams degree is slightly more general than the one appearing in \cite{LPWZ09}, since $P$ is not necessarily a group as in that article. 
We shall provide the tensor product $A^{\otimes i}$ with the grading over $P \times \ZZ$ given by setting $A^{p_{1}, n_{1}} \otimes \dots \otimes A^{p_{i},n_{i}}$ in cohomological degree $n_{1} + \dots + n_{i}$ 
and Adams degree $p_{1} \star \dots \star p_{i}$. 
An \emph{(Adams graded) $A_{\infty}$-algebra} structure on $A$ is a collection of maps 
$m_{i} : A^{\otimes i} \rightarrow A$ for $i \in \NN$ of cohomological degree $2-i$ and preserving the Adams degree satisfying the \emph{Stasheff identities} $\mathrm{SI}(n)$ given by   
\begin{equation}
\label{eq:ainftyalgebra}
   \sum_{(r,s,t) \in \mathscr{I}_{n}} (-1)^{r + s t}  m_{r + 1 + t} \circ (\mathrm{id}_{A}^{\otimes r} \otimes m_{s} \otimes \mathrm{id}_{A}^{\otimes t}) = 0,
\end{equation} 
for $n \in \NN$, where $\mathscr{I}_{n} = \{ (r,s,t) \in \NN_{0} \times \NN \times \NN_{0} : r + s + t = n \}$. 
Given $N \in \NN$, if $A$ is provided only with the morphisms $m_{i}$ for $i \in \NN_{\leq N}$ and satisfy the Stasheff identities $\mathrm{SI}(n)$ for $n \in \NN_{\leq N}$, we say that it is an \emph{$A_{N}$-algebra}.
Note that the first Stasheff identity $\mathrm{SI}(1)$ means that $m_{1}$ is a differential of $A$, 
so we may consider the cohomology $k$-module $H^{\bullet}(A)$ given by the quotient $\mathrm{Ker}(m_{1})/\mathrm{Im}(m_{1})$. 

An $A_{\infty}$-algebra is called \emph{(strictly) unitary} if there is a map $\eta_{A} : k \rightarrow A$ of complete degree zero (supposing that $P$ has a unit for $\star$) such that $m_{i} \circ (\mathrm{id}_{A}^{\otimes r} \otimes \eta_{A} \otimes \mathrm{id}_{A}^{\otimes t})$
vanishes for all $i \neq 2$ and all $r, t \geq 0$ such that $r+1+t = i$. 
We shall usually denote the image of $1_{k}$ under $\eta_{A}$ by $1_{A}$, and call it the \emph{(strict) unit} of $A$.  
We say that a unitary or nonunitary $A_{\infty}$-algebra is called \emph{minimal} if $m_{1}$ vanishes. 
Note that all the previous definitions can be applied as well to $A_{N}$-algebras, for $N \in \NN$. 
We see that a (unitary) dg algebra $(A,d_{A},\mu_{A})$ is a particular case of (unitary) $A_{\infty}$-algebra, where $m_{1} = d_{A}$ is the differential and $m_{2} = \mu_{A}$ is the product. 

A \emph{morphism of $A_{\infty}$-algebras} $f_{\bullet} : A \rightarrow B$ between two $A_{\infty}$-algebras $A$ and $B$ is a collection of morphisms 
of the underlying graded $k$-modules $f_{i} : A^{\otimes n} \rightarrow B$ of cohomological degree $1-i$ for $i \in \NN$ and preserving the Adams degree
satisfying the \emph{Stasheff identities on morphisms} $\mathrm{MI}(n)$ given by  
\begin{equation}
\label{eq:ainftyalgebramor}
   \sum_{(r,s,t) \in \mathscr{I}_{n}} (-1)^{r + s t}  f_{r + 1 + t} \circ (\mathrm{id}_{A}^{\otimes r} \otimes m_{s}^{A} \otimes \mathrm{id}_{A}^{\otimes t}) 
   = \sum_{q \in \NN} \sum_{\bar{i} \in \NN^{q, n}} (-1)^{w} m_{q}^{B} \circ (f_{i_{1}} \otimes \dots \otimes f_{i_{q}}),
\end{equation} 
for $n \in \NN$, where $w = \sum_{j=1}^{q} (q-j) (i_{j} - 1)$ and $\NN^{q,n}$ is the subset of $\NN^{q}$ of elements $\bar{i} = (i_{1},\dots,i_{q})$ such that $|\bar{i}| = i_{1} + \dots + i_{q} = n$. 
Given $N \in \NN$, if $A$ and $B$ are only $A_{N}$-algebras, a \emph{morphism of $A_{N}$-algebras} $f_{\bullet} : A \rightarrow B$ is a collection of morphisms 
of the underlying graded $k$-modules $f_{i} : A^{\otimes n} \rightarrow B$ of cohomological degree $1-i$ for $i \in \NN_{\leq N}$ and preserving the Adams degree 
satisfying the previous identities $\mathrm{MI}(n)$ for $n \in \NN_{\leq N}$.  
If $A$ and $B$ are unitary $A_{\infty}$-algebras, the morphism $f_{\bullet}$ is called \emph{(strictly) unitary} if $f_{1}(1_{A}) = 1_{B}$, and for all $i \geq 2$ we have that $f_{i}(a_{1}, \dots, a_{i})$ vanishes 
if there exists $j \in \{1, \dots, i \}$ such that $a_{j} = 1_{A}$. 
Notice that $f_{1}$ is a morphism of dg $k$-modules for the underlying structures on $A$ and $B$. 
We say that a morphism of (resp., unitary) $A_{\infty}$-algebras $f_{\bullet} : A \rightarrow B$ is a \emph{quasi-isomorphism} if $f_{1}$ is a quasi-isomorphism of the underlying complexes. 
We say that a morphism $f_{\bullet}$ is \emph{strict} if $f_{i}$ vanishes for $i \geq 2$. 
Note that all these definitions also apply to morphisms of $A_{N}$-algebras. 

The following result is well-known and due to T. Kadeishvili (see \cite{K80}, Thm. 1, for the case of dg algebras, or \cite{K82}, Thm., for the case of plain $A_{\infty}$-algebras). 
It includes however some extra conditions about the Adams degree which are useful for our study of persistent cohomology (the proof recalled in \cite{Her14}, Thm. 2.2, applies \textit{verbatim}). 
\begin{theorem}
\label{theorem:kadeish} 
Let $(A,m_{\bullet})$ be an (resp., a unitary) $A_{\infty}$-algebra and $f_{1} : H^{\bullet}(A) \rightarrow A$ be the composition of a section of the canonical projection $\Ker(m_{1}) \rightarrow H^{\bullet}(A)$ and the inclusion $\Ker(m_{1}) \subseteq A$ (resp., satisfying that $f_{1} \circ \eta_{H^{\bullet}(A)} = \eta_{A}$). 
Then there exists a structure of (resp., unitary) $A_{\infty}$-algebra on $H^{\bullet}(A)$ given by $\{ \bar{m}_{n} \}_{n \in \NN}$ and a quasi-isomorphism of (resp., unitary) $A_{\infty}$-algebras 
$f_{\bullet} : H^{\bullet}(A) \rightarrow A$ whose first component is $f_{1}$, such that $\bar{m}_{1} = 0$ and $\bar{m}_{2}$ is the multiplication induced by $m_{2}$. 
Moreover, all these possible structures of (resp., unitary) $A_{\infty}$-algebras on $H^{\bullet}(A)$ are quasi-isomorphic. 
Any of these quasi-isomorphic (resp., unitary) $A_{\infty}$-structures will be called a \emph{model}.  
\end{theorem}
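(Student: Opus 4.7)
The plan is to proceed by induction on $n \geq 1$, simultaneously constructing the higher multiplications $\bar{m}_{n}$ on $H^{\bullet}(A)$ and the higher components $f_{n}$ of the morphism $f_{\bullet}$. The base case is almost forced: $\bar{m}_{1} = 0$ is the definition, $\bar{m}_{2}$ is the product on cohomology induced by $m_{2}$, and one then constructs $f_{2} : H^{\bullet}(A)^{\otimes 2} \to A$ as a primitive (under $m_{1}$) of $m_{2} \circ (f_{1} \otimes f_{1}) - f_{1} \circ \bar{m}_{2}$. The existence of such a primitive is precisely the assertion that $\bar{m}_{2}$ is well-defined on cohomology; in the unitary case one also demands $f_{2}$ to vanish on tensors containing $1_{H^{\bullet}(A)}$, which is possible because $f_{1}$ was chosen to respect the units.

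For the inductive step at level $n \geq 3$, assume that $\bar{m}_{k}$ and $f_{k}$ have been produced for $k < n$ so that the identities $\mathrm{SI}(k)$ and $\mathrm{MI}(k)$ hold. Consider the map $V_{n} : H^{\bullet}(A)^{\otimes n} \to A$ obtained from $\mathrm{MI}(n)$ by moving everything to one side and isolating the two ``unknown'' contributions $f_{1} \circ \bar{m}_{n}$ (coming from the triple $(r,s,t) = (0,n,0)$ on the left) and $m_{1} \circ f_{n}$ (coming from $q = 1$ on the right). The crucial step is to verify that $m_{1} \circ V_{n} = 0$, so that $V_{n}$ factors through $\Ker(m_{1})$ and therefore represents a class in $H^{\bullet}(A)$. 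This verification is a bookkeeping argument: one expands $m_{1} \circ V_{n}$ and invokes the Stasheff identities $\mathrm{SI}(j)$ on $A$ for $j \leq n$ together with the inductively available identities $\mathrm{SI}(j)$ and $\mathrm{MI}(j)$ on $H^{\bullet}(A)$ for $j < n$; with the signs $(-1)^{r+st}$ and $(-1)^{w}$ dictated by $\mathrm{SI}$ and $\mathrm{MI}$, the various contributions pair off and cancel.

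Once $V_{n}$ is known to land in $\Ker(m_{1})$, I define $\bar{m}_{n}$ as the composition of $V_{n}$ with the canonical projection $\Ker(m_{1}) \to H^{\bullet}(A)$. The difference $V_{n} - f_{1} \circ \bar{m}_{n}$ then lies pointwise in $\mathrm{Im}(m_{1})$ by the choice of $f_{1}$ as a section of that projection, so $f_{n}$ is defined by picking, tensorwise, a preimage under $m_{1}$. By construction $\mathrm{SI}(n)$ and $\mathrm{MI}(n)$ hold, completing the induction. Uniqueness up to quasi-isomorphism of any two models $(H^{\bullet}(A), \bar{m}_{\bullet})$ and $(H^{\bullet}(A), \bar{m}'_{\bullet})$ arising from (possibly different) choices of sections and preimages follows by an entirely analogous induction, producing an $A_{\infty}$-quasi-isomorphism $g_{\bullet}$ between them whose first component is $\mathrm{id}_{H^{\bullet}(A)}$. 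The unitary variant is handled in parallel: choose all preimages to annihilate tensors containing $1_{H^{\bullet}(A)}$, which is possible since the obstruction $V_{n}$ already enjoys this vanishing by the inductive hypothesis and the strict unitality of $m_{\bullet}$.

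The main obstacle I anticipate is the proof that $V_{n}$ is a cocycle, that is the combinatorial heart of Kadeishvili's construction and the reason the theorem is nontrivial; it requires organized bookkeeping over the index sets $\mathscr{I}_{n}$ and $\NN^{q,n}$, and careful use of the Stasheff signs. The Adams grading constraint, by contrast, causes no extra difficulty: every map involved ($m_{k}$, $\bar{m}_{k}$, $f_{k}$, the section defining $f_{1}$, and the projection to $H^{\bullet}(A)$) preserves the Adams degree, so every map constructed inductively does as well.
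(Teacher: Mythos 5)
The paper does not prove this theorem; it is stated as a citation to Kadeishvili's papers and to Thm.\ 2.2 of \cite{Her14}, with the remark that the proof there applies \textit{verbatim} once one observes that all maps preserve Adams degree. Your sketch is exactly that standard inductive construction, and in outline it is correct. There is, however, one step that you pass over too quickly: you write ``By construction $\mathrm{SI}(n)$ and $\mathrm{MI}(n)$ hold.'' For $\mathrm{MI}(n)$ this is right, since $\bar{m}_{n}$ and $f_{n}$ were manufactured precisely to solve that equation. But $\mathrm{SI}(n)$ for $H^{\bullet}(A)$ cannot hold ``by construction'' of $\bar{m}_{n}$, for the simple reason that it does not involve $\bar{m}_{n}$ at all: since $\bar{m}_{1} = 0$, the only surviving terms $(r,s,t) \in \mathscr{I}_{n}$ in \eqref{eq:ainftyalgebra} have $2 \leq s \leq n-1$ and $2 \leq r+1+t \leq n-1$, so the identity is a constraint only on the already-constructed $\bar{m}_{2}, \dots, \bar{m}_{n-1}$. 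It therefore has to be \emph{verified}, not imposed, at step $n$ --- typically by showing that $f_{1}$ applied to the left-hand side of $\mathrm{SI}(n)$ vanishes (rewriting it via the already-established $\mathrm{MI}(j)$ and the Stasheff identities for $A$) and then invoking the injectivity of $f_{1}$. This verification runs in parallel to, and is logically prior to, the cocycle argument for $V_{n}$: you need $\mathrm{SI}(n)$ for the ambient cohomology among the inputs when you expand $m_{1} \circ V_{n}$. The rest of your plan --- the treatment of units, the analogous induction for uniqueness with $g_{1} = \mathrm{id}_{H^{\bullet}(A)}$, and the observation that the Adams grading is inert --- is accurate and matches the standard argument.
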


\section{\texorpdfstring{$A_{\infty}$-algebra structure on persistent cohomology}{A-infinity-algebra structure on persistent cohomology}}
\label{section:a_infpcoho}

\subsection{Generalities}
\label{subsection:gen}

We recall that $(P , \succcurlyeq)$ is a poset provided with an associative and commutative product structure $\star : P \times P \rightarrow P$   
satisfying the monotonicity property. 
Suppose now that the filtered dg module $(C,d_{C})$ we have considered in Subsection \ref{subsection:pers} is in fact a dg algebra, and that the filtration is \emph{multiplicative}, \textit{i.e.} 
$F^{p}C \cdot F^{q}C \subseteq F^{p \star q}C$, for all $p, q \in P$. 
If $P$ has a unit $0$ for $\star$, and $(C,d_{C})$ has also a unit $1_{C}$, we further assume that $1_{C} \in F^{0}C$. 
In this case it is easily verified that \eqref{eq:reescomplex} is in fact an Adams graded (unitary) dg algebra, for the obvious product (and unit). 
In our situation of interest, \textit{e.g.} $C^{\bullet}(\mathcal{X},k)$ for some simplicial set $X$, they are unitary dg algebras for the cup product, 
where the unit is given the map $C_{\bullet}(\mathcal{X},k) \rightarrow k$ whose restriction to $C_{n}(\mathcal{X},k)$ vanishes for $n \neq 0$ and sends any $0$-simplex to $1$ 
(see \cite{Ha02}, Ch. 3, Sec. 2, or \cite{FHT01}, Ch. 10, (d)).
However, given any (increasing) filtration of the simplicial complex $\mathcal{X}$ indexed by (say) the integers (as it may be the case in algebraic topology), 
the induced filtration for $C^{\bullet}(\mathcal{X},k)$ need not be in general multiplicative if one uses the multiplication of the integers (see \cite{Sh76}). 
If we consider the case of the complex of normalized cochains $C^{\bullet}(\mathcal{X},k)$ of a simplicial set $\mathcal{X}$ which is provided with the \emph{skeletal filtration}, indexed by $p \in \ZZ$, 
(see \cite{FHT01}, Ch. 10, (a)), a simple computation using the explicit expression of the cup product of $C^{\bullet}(\mathcal{X},k)$ shows that the induced filtration $\{ F^{p}C(\mathcal{X},k) \}_{p \in \ZZ}$ 
is indeed multiplicative (\textit{cf.} \cite{FHT01}, Ch. 10, (d)). 
This example is nevertheless far from the motivations of topological data analysis, since in the latter situation the possible filtration of interest are somehow already prescribed, and constructed 
from the metric of the simplicial set $\mathcal{X}$ (which is induced from some embedding in $\RR^{n}$). 
For this reason we prefer to relax the hypotheses on the product of the set of indices $P$ (and even on the set itself), 
which also led us to consider more properly exact couples systems (than classical exact couples). 
For instance we may take the product in $P$ as the one given by $p \star p' = \operatorname{inf}(p,p')$, which is the coarsest possible in case $P$ is totally ordered and the members of the filtration 
are (not necessarily unitary) dg subalgebras of $(C,d_{C})$. 
These hypotheses are for instance always satisfied in the case of the filtration of a dg algebra of normalized cochains on a simplicial set provided with the induced filtration from any filtration 
of the simplicial set. 
In any case, from now on we shall assume that the filtration of the (unitary) dg algebras we are dealing with are multiplicative. 

Under the previous assumptions we see that \eqref{eq:reescomplex} is in fact a(n Adams graded unitary) dg algebra over $k$. 
Now, using Theorem \ref{theorem:kadeish}, we get that its cohomology is a (unitary) $A_{\infty}$-algebra over $k$, and they are in fact quasi-isomorphic, as (unitary) $A_{\infty}$-algebras over $k$. 
In other words, the first term $D = H^{\bullet}(\mathcal{D})$ of the exact couple system associated to $(C,d_{C})$ is naturally a (unitary) $A_{\infty}$-algebra over $k$ (this is slightly stronger, with respect to $D$, than the result stated in \cite{Ma13}, Example 4.3). 
Note that, since the multiplication of $D$ is induced from the multiplication of $\mathcal{D}$, the former is bilinear 
for the action of $k.\tilde{P}$. 

We would like to remark that the previous $A_{\infty}$-algebra is only a shadow of the complete structure of $A_{\infty}$-algebra over 
$k.\tilde{P}$ (defined on another space, not on $D$). 
Indeed, by the results of \cite{Her14}, Section 3 (adapted so that indices are more general than just the integers), 
one obtains (at least under some assumptions on the filtration) that there is an $A_{\infty}$-algebra over $k.\tilde{P}$ 
on a space $\mathscr{D}$, which is quasi-isomorphic as $A_{\infty}$-algebras over $k.\tilde{P}$ to $\mathcal{D}$, minimal in the sense of S. Sagave 
(see \cite{Sa10}), and also quasi-isomorphic (but only as $A_{\infty}$-algebras over $k$) to $D$.
In other words, the $A_{\infty}$-algebra structure of $D$ only deals with part of the whole picture. 
However, since $D$ seems to be a preferred choice in topological data analysis, and it already includes some higher homotopic information, 
we have decided to work with it. 
We believe nevertheless that the mentioned space $\mathscr{D}$ should also deserve some attention. 

\subsection{An extension of the bottleneck distance}
\label{subsection:bm}

We are now going to propose a finer but (essentially) equivalent (with respect to the bottleneck metric, if one forgets the extra structure) distance function 
in the space of $P$-filtered (unitary) dg algebras, where the filtration is continuous on the left and essentially discrete 
(under some multiplicativity assumptions). 
We suppose for the rest of the article $P$ is not only a totally ordered set such that the order topology is separable, 
but it is also provided with a strict metric $d_{P}$ such that the order topology coincides with the one induced by the metric, 
and the metric satisfies the ternary relation given by $d_{P}(p,p'') = d_{P}(p,p') + d_{P}(p',p'')$, 
for all $p, p', p'' \in P$ satisfying that $p \preccurlyeq p' \preccurlyeq p''$. 
Our idea is, \textit{grosso modo}, that the general principle of comparison recalled at the antepenultimate paragraph of Subsubsection \ref{subsubsection:dist} could be enriched as follows: two filtered dg modules 
$C$ and $C'$ (provided with further structure) are ``similar'' if not only the cohomology classes of $C$ and $C'$ which persist longer have similar multiplicities, but also the way the cohomology classes of larger persistence of $C$ interact 
is similar to the way the corresponding cohomology classes of $C'$ interact.
Since a typical manner in algebraic topology to encode how a set of cohomology classes (satisfying some conditions) interact is by means of the Massey products, and since 
they can be organised (getting rid of the restrictions mentioned previously) into the $A_{\infty}$-algebra structure on the corresponding cohomology (see \cite{LPWZ09}, Thm. 3.1 and Cor. A.5), 
we think that the most natural way to depict the previous interactions between cohomology classes is by taking into account the corresponding $A_{\infty}$-algebra structure. 
The definition of the extension of the bottleneck distance which considers these (higher) multiplications will require some preparations, 
which we now provide. 

Given $N \in \NN \sqcup \{ +\hskip -0.4mm \infty\}$, we will introduce the \emph{$A_{N}$-bottleneck metric} 
between two $P$-filtered dg algebras $A$ and $A'$, where the filtration is continuous on the left and essentially discrete. 
Let $P_{s}$ be a discrete subset of $P$ satisfying the condition of Remark \ref{remark:essfin} for $A$, 
and analogously $P'_{s}$ for $A'$. 
Furthermore, we assume that the respective Rees dg algebras have locally finite dimensional cohomology. 
We also suppose that $P_{s}$ and $P'_{s}$ are provided with associative and commutative binary operations $\star$ and $\star'$. 
This hypothesis is always met if the members of the corresponding filtrations are dg subalgebras and the operations 
are given by taking the minimum (or, roughly speaking, any situation where the product on the set of indices comes from the filtration itself). 
Given intervals $I$ and $I'$ of $P$, $\mu^{n}_{I}$ and $(\mu')^{n}_{I'}$ will denote the corresponding multiplicities of the barcodes at the intervals $I$ of $A$ and $I'$ of $A'$, as explained in Subsubsection \ref{subsubsection:bar}. 

By the comments at the last paragraph of Subsubsection \ref{subsubsection:dist}, for each $n \in \ZZ$ and each interval, 
which we may take without loss of generality to be of the form 
$R_{p_{1},p_{2}} = \{ p \in P : p_{1} \succcurlyeq p \succ p_{2} \}$, for some $p_{1} \in P_{s}$ and $p_{2} \in P_{s} \sqcup \{ - \infty\}$ 
such that $p_{2} \succ p_{1}$, we may choose a set of cocycles 
\[     \{ z^{n,m}_{p_{1},p_{2}} : m \in \NN_{\leq \mu^{n}_{R_{p_{1},p_{2}}}} \} \subseteq F^{p_{1}}A^{n}     \] 
of $\mathcal{D}$, satisfying that the image of the $k$-submodule expanded by the set of their cohomology classes under the canonical projection 
\[     H^{n}(F^{p_{1}}A) \rightarrow \frac{H^{n}(F^{p_{1}}A)}{\mathrm{Im}(H^{n}(F^{p_{1}+1}A) \rightarrow H^{n}(F^{p_{1}}A))}     \] 
has dimension $\mu^{n}_{p_{1},p_{2}} = \mu^{n}_{R_{p_{1},p_{2}}}$, and that $\omega^{p_{2}}_{p_{1}}.z^{n,m}_{p_{1},p_{2}}$ is a coboundary, for every $m = 1, \dots, \mu^{n}_{p_{1},p_{2}}$. 
We recall that if $p \in P_{s}$, then $p + 1$ denotes its immediate successor in $P_{s}  \sqcup \{ + \hskip -0.4mm \infty\}$.
Let us call $\mathcal{B}^{n}$ the set formed by all $z^{n,m}_{p_{1},p_{2}}$ for 
$p_{1} \in P_{s}$, $p_{2} \in P_{s} \sqcup \{ - \infty \}$ such that $p_{1} \succ p_{2}$ and all positive integers $m \leq \mu^{n}_{p_{1},p_{2}}$.
For every $n \in \ZZ$ and every interval $R'_{p'_{1},p'_{2}}$ of $P'$ defined analogously, 
for $p'_{1} \in P'_{s}$, and $p'_{2} \in P'_{s} \sqcup \{ - \infty \}$ such that $p'_{1} \succ p'_{2}$, we will denote the corresponding set of cocycles by
\[     \{ (z')^{n,m'}_{p'_{1},p'_{2}} : m' \in \NN_{\leq (\mu')^{n}_{R'_{p'_{1},p'_{2}}}} \} \subseteq F^{p'_{1}}(A')^{n},     \] 
and the union of these sets for all $p'_{1} \in P'_{s}$, $p'_{2} \in P'_{s} \sqcup \{ - \infty \}$ such that $p'_{1} \succ p'_{2}$ and all positive integers $m' \leq (\mu')^{n}_{R_{p'_{1},p'_{2}}} = (\mu')^{n}_{p'_{1},p'_{2}}$ will be denoted by $(\mathcal{B}')^{n}$.
Moreover we set
\[     \mathcal{B} = \bigcup_{n \in \ZZ} \mathcal{B}^{n} \text{\hskip 1cm and \hskip 1cm} \mathcal{B}' = \bigcup_{n \in \ZZ} (\mathcal{B}')^{n}.     \]

Note the easy fact that the cohomology classes of the elements $\omega^{p_{3}}_{p_{1}}.z^{n,m}_{p_{1},p_{2}}$, for $p_{1}, p_{3} \in P_{s}$ and $p_{2} \in P_{s} \sqcup \{ - \infty \}$
such that $p_{1} \succcurlyeq p_{3} \succ p_{2}$, $n \in \ZZ$, and positive integer $m \in \mu^{n}_{p_{1},p_{2}}$, 
form a $k$-basis of the cohomology $H^{\bullet}(\mathcal{D})$ of the Rees dg algebra of $A$. 
We shall denote the basis formed by the previous elements with $n$ fixed by $\hat{\mathcal{B}}^{n}$, and 
\[     \hat{\mathcal{B}} = \bigcup_{n \in \ZZ} \hat{\mathcal{B}}^{n}.     \]
The same considerations hold for $A'$, for which we denote the corresponding bases by $(\hat{\mathcal{B}}')^{n}$ and $\hat{\mathcal{B}}'$.  
As in the usual case, for fixed $n \in \ZZ$, we have the distance $d^{n}(z^{n,m}_{p_{1},p_{2}},z^{n,m'}_{p'_{1},p'_{2}}) = \tilde{d}(R_{p_{1},p_{2}},R'_{p'_{1},p'_{2}})$, for all $p_{1} \in P_{s}$, $p_{2} \in P_{s} \sqcup \{ - \infty \}$ such that $p_{1} \succ p_{2}$, $p'_{1} \in P'_{s}$, $p'_{2} \in P'_{s} \sqcup \{ - \infty \}$ such that $p'_{1} \succ p'_{2}$, and 
all positive integers $m \leq \mu^{n}_{p_{1},p_{2}}$ and $m' \leq (\mu')^{n}_{p'_{1},p'_{2}}$. 
We even extend this distance to define 
\[     d^{n}(\omega^{p_{3}}_{p_{1}}.z^{n,m}_{p_{1},p_{2}}, \omega^{p'_{3}}_{p'_{1}}.z^{n,m'}_{p'_{1},p'_{2}}) = \tilde{d}(R_{p_{3},p_{2}},R_{p'_{3},p'_{2}}),     \] 
for all $p_{1}, p_{3} \in P_{s}$ and $p_{2} \in P_{s} \sqcup \{ - \infty \}$ such that $p_{1} \succcurlyeq p_{3} \succ p_{2}$, 
$p'_{1}, p'_{3} \in P'_{s}$ and $p'_{2} \in P'_{s} \sqcup \{ - \infty \}$ such that $p'_{1} \succcurlyeq p'_{3} \succ p'_{2}$, and 
all positive integers $m \leq \mu^{n}_{p_{1},p_{2}}$ and $m' \leq (\mu')^{n}_{p'_{1},p'_{2}}$. 

Let $S$ and $S'$ be two nonempty subsets of $\hat{\mathcal{B}}^{n}$ and of $(\hat{\mathcal{B}}')^{n}$, respectively.
We recall that, considering the latter two as multisets, then the set $\operatorname{PM}(S, S')$ of partial matchings coincides 
with the collection of subsets $\mathcal{P}$ of 
\[     \Big((S \sqcup \{ * \}) \times (S' \sqcup \{ * \})\Big) \setminus \{ (*,*) \}     \] 
satisfying that, for any $z \in S$ (resp., $z' \in S'$), the cardinality of the set given by $(\{ z \} \times (S' \sqcup \{ * \})) \cap \mathcal{P}$ (resp., $((S \sqcup \{ * \}) \times \{ z' \}) \cap \mathcal{P}$) is exactly one. 
We also define the distances $d^{n}(\omega^{p_{3}}_{p_{1}}.z^{n,m}_{p_{1},p_{2}},*) = \tilde{d}(R_{p_{3},p_{2}},*)$ and $d^{n}(*,\omega^{p'_{3}}_{p'_{1}}.z^{n,m'}_{p'_{1},p'_{2}}) = \tilde{d}(*,R'_{p'_{3},p'_{2}})$, 
for all $p_{1} \in P_{s}$, $p_{2} \in P_{s} \sqcup \{ - \infty \}$ such that $p_{1} \succ p_{2}$, $p'_{1} \in P'_{s}$, $p'_{2} \in P'_{s} \sqcup \{ - \infty \}$ such that $p'_{1} \succ p'_{2}$, and 
all positive integers $m \leq \mu^{n}_{p_{1},p_{2}}$ and $m' \leq (\mu')^{n}_{p'_{1},p'_{2}}$. 
We now define the distance between $S$ and $S'$ by the formula 
\[     \hat{d}(S,S') = \underset{\mathcal{P} \in \operatorname{PM}(S,S')}{\operatorname{inf}} \underset{(z,z') \in \mathcal{P}}{\operatorname{sup}} d^{n}(z,z').     \]
Note that the previous distance depends in fact of the chosen bases $\hat{\mathcal{B}}^{n}$ and $(\hat{\mathcal{B}}')^{n}$, 
but we omitted them from the notation for simplicity. 
We extend the previous definition to consider the case that either $S$ or $S'$ coincides with $\{*\}$ by means of 
\[     \hat{d}(S,\{*\}) = \underset{z \in S}{\operatorname{sup}} \hskip 0.8mm d^{n}(z,*), \hskip 1cm \hat{d}(\{*\},S') = \underset{z' \in S'}{\operatorname{sup}} d^{n}(*,z'),    \]
and $\hat{d}(\{*\},\{*\}) = 0$. 
We have the following result.
\begin{fact}
\label{fact:2}
Let $n \in \ZZ$ and $A$, $A'$ and $A''$ be three $P$-filtered dg algebras, such that the filtrations are essentially discrete and continuous on the left. 
We suppose as usual the multiplicity property for the discrete subsets $P_{s}$, $P'_{s}$ and $P''_{s}$. 
Let $S$, $S'$, $S''$ be three elements of $(\mathscr{P}(\hat{\mathcal{B}}^{n}) \setminus \{ \emptyset \}) \sqcup \{ \{ * \} \}$, $(\mathscr{P}((\hat{\mathcal{B}}')^{n}) \setminus \{ \emptyset \}) \sqcup \{ \{ * \} \}$ and $(\mathscr{P}((\hat{\mathcal{B}}'')^{n})  \setminus \{ \emptyset \}) \sqcup \{ \{ * \} \}$, respectively, 
constructed as in the second previous paragraph. 
Then, we have the property 
\[     \hat{d}(S,S'') \leq \hat{d}(S,S') + \hat{d}(S',S'').     \]
\end{fact}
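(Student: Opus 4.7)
The plan is to reduce the triangle inequality for $\hat{d}$ to the triangle inequality for $d^{n}$ (which in turn follows from the fact that $\tilde{d}$ is a distance on $\mathcal{I}_{P} \sqcup \{*\}$, as asserted after its definition in Subsubsection \ref{subsubsection:dist}) by composing partial matchings. The degenerate cases in which one or more of $S$, $S'$, $S''$ equals $\{*\}$ are handled directly: for instance, if $S' = \{*\}$, then $\hat{d}(S,S') + \hat{d}(S',S'') = \sup_{z \in S} d^{n}(z,*) + \sup_{z'' \in S''} d^{n}(*,z'')$, and one checks that the partial matching of $S$ with $S''$ that sends every element to $*$ has cost bounded by this sum, using $d^{n}(z,z'') \leq d^{n}(z,*) + d^{n}(*,z'')$. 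So I focus on the generic case where all three sets are honest nonempty subsets of the respective bases.

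Given $\varepsilon>0$, I would choose $\mathcal{P}_{1} \in \operatorname{PM}(S,S')$ and $\mathcal{P}_{2} \in \operatorname{PM}(S',S'')$ with $\sup_{(z,z') \in \mathcal{P}_{1}} d^{n}(z,z') \leq \hat{d}(S,S') + \varepsilon$ and $\sup_{(z',z'') \in \mathcal{P}_{2}} d^{n}(z',z'') \leq \hat{d}(S',S'') + \varepsilon$. I then define the composed partial matching $\mathcal{P}_{1} \star \mathcal{P}_{2} \in \operatorname{PM}(S,S'')$ by the following rule: include $(z,z'')$ (with $z \in S$, $z'' \in S''$) whenever there exists $z' \in S'$ with $(z,z') \in \mathcal{P}_{1}$ and $(z',z'') \in \mathcal{P}_{2}$; include $(z,*)$ whenever $(z,*) \in \mathcal{P}_{1}$, or whenever $(z,z') \in \mathcal{P}_{1}$ with $z' \in S'$ and $(z',*) \in \mathcal{P}_{2}$; and dually include $(*,z'')$ whenever $(*,z'') \in \mathcal{P}_{2}$, or whenever $(z',z'') \in \mathcal{P}_{2}$ with $z' \in S'$ and $(*,z') \in \mathcal{P}_{1}$. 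Since each element of $S$ is matched exactly once by $\mathcal{P}_{1}$ (and analogously for $S''$ by $\mathcal{P}_{2}$), one verifies that $\mathcal{P}_{1} \star \mathcal{P}_{2}$ is a valid partial matching in $\operatorname{PM}(S,S'')$.

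By construction, every pair $(z,z'') \in \mathcal{P}_{1} \star \mathcal{P}_{2}$ admits a witness $z_{*} \in S' \sqcup \{*\}$ such that $(z,z_{*})$ lies in $\mathcal{P}_{1}$ (or equals $(*,*)$) and $(z_{*}, z'')$ lies in $\mathcal{P}_{2}$ (or equals $(*,*)$). Invoking the triangle inequality of $d^{n}$ one obtains
\[ d^{n}(z,z'') \leq d^{n}(z,z_{*}) + d^{n}(z_{*}, z'') \leq (\hat{d}(S,S') + \varepsilon) + (\hat{d}(S',S'') + \varepsilon). \]
Taking the supremum over $(z,z'') \in \mathcal{P}_{1} \star \mathcal{P}_{2}$ and then the infimum over $\operatorname{PM}(S,S'')$ gives $\hat{d}(S,S'') \leq \hat{d}(S,S') + \hat{d}(S',S'') + 2\varepsilon$, and letting $\varepsilon \to 0$ yields the claim.

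The only real obstacle is the bookkeeping required to check that $\mathcal{P}_{1} \star \mathcal{P}_{2}$ is a genuine partial matching, i.e.\ that each $z \in S$ and each $z'' \in S''$ appears in exactly one pair; this is a routine but delicate case analysis, organized by whether $z$ (respectively $z''$) is matched via $\mathcal{P}_{1}$ (respectively $\mathcal{P}_{2}$) to an element of $S'$ or to $*$, and the intermediate pairs $(*, z') \in \mathcal{P}_{1}$ with $(z',*) \in \mathcal{P}_{2}$ simply drop out of the composition without affecting the matching condition on $S$ and $S''$.
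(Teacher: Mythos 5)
Your proof is correct and follows essentially the same route as the paper's: both construct the composed partial matching $\mathcal{P}_1 \star \mathcal{P}_2$ (the paper calls it the composition $\mathcal{P}$, described via the subsets $S_0$, $S'_0$, $S''_0$, which yields exactly your rules) and bound its cost via the triangle inequality for $d^n$. The only cosmetic difference is that you spell out the degenerate $\{*\}$ cases and the verification that $\mathcal{P}_1 \star \mathcal{P}_2$ is a valid matching, whereas the paper dismisses the former as clear and leaves the latter implicit.
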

\begin{proof}
Define $d_{1} = \hat{d}(S,S')$ and $d_{2} = \hat{d}(S',S'')$. 
If they are infinite there is nothing to prove, so we assume they are finite. 
We shall make the proof for the case that $S, S', S''$ do not coincide with $\{ * \}$, for the latter is clear. 
Take any real number $\epsilon > 0$ and let 
$\mathcal{P}^{1} \in \operatorname{PM}(S,S')$ and $\mathcal{P}^{2} \in \operatorname{PM}(S',S'')$ be such that 
\[     \underset{(u,v) \in \mathcal{P}^{i}}{\operatorname{sup}} d^{n}(u,v) \leq d_{i} + \epsilon,     \]
for $i = 1, 2$. 
Define a partial matching $\mathcal{P}$ between $S$ and $S''$, which we call the \emph{composition} of $\mathcal{P}^{1}$ and $\mathcal{P}^{2}$, 
as follows.  
Let $S'_{0} \subset S'$ be the subset of elements $z' \in S'$ such that $(*,z') \notin \mathcal{P}^{1}$ and $(z',*) \notin \mathcal{P}^{2}$. 
Define $S_{0} \subset S$ (resp., $S_{0}'' \subset S''$) as the set of elements $z \in S$ (resp., $z'' \in S''$) 
such that there exists $z' \in S'_{0}$ satisfying that $(z,z') \in \mathcal{P}^{1}$ (resp., $(z',z'') \in \mathcal{P}^{2}$). 
Set 
\[     \mathcal{P}_{0} = \{ (z,z'') \in S_{0} \times S''_{0} : \text{ $\exists z' \in S'_{0}$ such that $(z,z') \in \mathcal{P}^{1}$ and $(z',z'') \in \mathcal{P}^{2}$}\},     \]
and 
\[     \mathcal{P} = \mathcal{P}_{0} \cup \big((S \setminus S_{0}) \times \{ * \}\big) \cup \big(\{ * \} \times (S'' \setminus S''_{0})\big).     \]
If $(z,z'') \in \mathcal{P}_{0}$, then $d^{n}(z,z'') \leq d^{n}(z,z') + d^{n}(z',z'')$, for any $z' \in S'$. 
By taking $z'$ as in the definition of $\mathcal{P}_{0}$, we get that $d^{n}(z,z'') \leq d_{1} + d_{2} + 2 \epsilon$. 
If $(z,z'') \in \mathcal{P} \setminus \mathcal{P}_{0}$, we have four possibilities: 
\begin{enumerate}[label=(\roman*)]
\item\label{item:i} 
$z = *$ and $(*,z'') \in \mathcal{P}^{2}$, 

\item\label{item:ii} 
$z'' = *$ and $(z,*) \in \mathcal{P}^{1}$,

\item\label{item:iii} 
$z = *$ and there exists $z' \in S'$ such that $(*,z') \in \mathcal{P}^{1}$ and $(z',z'') \in \mathcal{P}^{2}$,
 
\item\label{item:iv} 
$z'' = *$ and there exists $z' \in S'$ such that $(z,z') \in \mathcal{P}^{1}$ and $(z',*) \in \mathcal{P}^{2}$. 
\end{enumerate}
The fact that $\tilde{d}$ (or $d^{n}$) is a metric tells us that, in the four cases, $d^{n}(z,z'') \leq d_{1} + d_{2} + 2 \epsilon$. 
Since the previous identity holds for any $\epsilon > 0$, we have that $d^{n}(z,z'') \leq d_{1} + d_{2}$, for all 
$(z,z'') \in \mathcal{P}$. 
This proves the fact. 
\end{proof}

For each $i \in \NN_{\geq 2}$, and utilizing the usual convention $[z]$ for the cohomology class of a cocycle $z$, we know that 
\[     m_{i}([z^{n^{1},m^{1}}_{p_{1}^{1},p_{2}^{1}}], \dots,  [z^{n^{i},m^{i}}_{p_{1}^{i},p_{2}^{i}}])     \]
lies in the $k$-submodule generated by the cohomology classes of $\omega^{p_{3}}_{p_{1}}.z^{n + 2 - i,m}_{p_{1},p_{2}}$, where $n = n_{1} + \dots + n_{i}$, $p_{1} = p_{1}^{1} \star \dots \star p_{1}^{i}$, 
$p_{2} \in P_{s} \sqcup \{ - \infty \}$, $p_{3} \in P_{s}$ such that $p_{1} \succcurlyeq p_{3} \succ p_{2}$, and $m$ is a positive integer less than or equal to $\mu^{n + 2 - i}_{p_{1},p_{2}}$. 
We shall denote the set of cocycles whose coefficient in the previous expansion is nonzero by $\mathcal{A}(i, z^{n^{1},m^{1}}_{p_{1}^{1},p_{2}^{2}}, \dots,  z^{n^{i},m^{i}}_{p_{1}^{i},p_{2}^{i}})$. 
We even define the previous set for the case that any of the cocycles $z^{n^{j},m^{j}}_{p_{1}^{j},p_{2}^{j}}$, for $j = 1, \dots, i$, is replaced by the singular element $*$, or there are no nonzero coefficients, 
in which case it is considered to be $\{ * \}$. 
We consider the case $i = 1$ as well, and set $\mathcal{A}(1,z) = \{ z \}$. 
The same comments apply to the elements of $\hat{\mathcal{B}}'$ as well. 

Given $i \in \NN$ and a set $X$, let us denote by $X^{[i]}$ the set of maps from $\{ 1, \dots, i \}$ to $X$.
For $N \in \NN \sqcup \{ +\hskip-0.4mm \infty \}$ we define the \emph{pre-$A_{N}$-bottleneck metric} between $H^{\bullet}(\mathcal{D})$ provided with an $A_{N}$-algebra structure $m_{\bullet}$ 
and $H^{\bullet}(\mathcal{D}')$ provided with an $A_{N}$-algebra structure $m'_{\bullet}$ as 
\[    \operatorname{inf} 
      \underset{\text{\begin{scriptsize}
      \begin{tabular}{c}
      $i = 1, \dots, N$
      \\
      $\phi \in (\bigcup\limits_{n \in \ZZ} \mathcal{P}_{n})^{[i]}$\end{tabular}\end{scriptsize}}}{\operatorname{sup}}  
      \hat{d}(\mathcal{A}(i,\phi(1)_{1}, \dots, \phi(i)_{1}),\mathcal{A}'(i,\phi(1)_{2}, \dots, \phi(i)_{2}))),     \]
where the infimum is taken over all infinite tuples $(\mathcal{P}_{n})_{n \in \ZZ} \in \prod_{n \in \ZZ} \operatorname{PM}(\mathcal{B}^{n}, (\mathcal{B}')^{n})$, and we have denoted $\phi(j) = (\phi(j)_{1}, \phi(j)_{2})$, $\phi(j)_{1} \in \mathcal{B}$, $\phi(j)_{2} \in \mathcal{B}'$, for all $j = 1, \dots, i$. 
We shall denote the previous infimum by 
\[     \hat{\delta}_{N}((H^{\bullet}(\mathcal{D}),m_{\bullet}),(H^{\bullet}(\mathcal{D}'),m'_{\bullet})).      \]
Finally we define the $A_{N}$-bottleneck metric between $H^{\bullet}(\mathcal{D})$ and $H^{\bullet}(\mathcal{D}')$ as 
the quotient metric of $\hat{\delta}_{N}$ under the equivalence relation generated by quasi-isomorphisms of minimal $A_{N}$-algebras. 
More precisely, let $\mathcal{A}_{N}^{\mathrm{min}}(A,A')$ be the set of minimal $A_{N}$-algebra structures 
on either the bigraded module $H^{\bullet}(\mathcal{D})$ or $H^{\bullet}(\mathcal{D}')$. 
We consider the equivalence relation given by $X \sim X'$ if and only if $X$ and $X'$ are quasi-isomorphic $A_{N}$-algebras over $k$. 
Then, the $A_{N}$-bottleneck metric is given by 
\begin{equation}
\label{eq:a_nbm}
     \partial_{\mathcal{B},N}(H^{\bullet}(\mathcal{D}),H^{\bullet}(\mathcal{D}')) = \operatorname{inf} \sum_{j=1}^{M} \hat{\delta}_{N}({}^{j}X , {}^{j}X'),     
\end{equation}
where the infimum is taken over all the choices of any finite set of elements ${}^{j}X$ and ${}^{j}X'$ in the set $\mathcal{A}_{N}^{\mathrm{min}}(A,A')$, for $j = 1, \dots, M$, 
such that the $A_{N}$-algebra structure of ${}^{1}X$ is $m_{\bullet}$ on $H^{\bullet}(\mathcal{D})$, 
the $A_{N}$-algebra structure of ${}^{M}X'$ is $m'_{\bullet}$ on $H^{\bullet}(\mathcal{D}')$,
and ${}^{j}X' \sim {}^{j+1}X$, for all $j = 1, \dots, n-1$.  
This metric (sometimes called the \emph{teleportation distance}) is a fairly classical construction and was first considered by C. Himmelberg in \cite{Him68}, Thm. 2 
(see also \cite{BBI01}, Def. 3.1.12, and the subsequent paragraphs for further considerations). 

We suspect that the proof of the following result is standard, 
but we provide it for convenience.  
\begin{proposition}
\label{proposition:metric}
Let $N \in \NN \sqcup \{+ \hskip -0.4mm \infty\}$. 
The function \eqref{eq:a_nbm} defined on the set of generalized barcodes of $P$-filtered dg algebras, 
having a filtration which is continuous on the left and essentially discrete, and of locally finite dimensional cohomology, is a metric.
\end{proposition}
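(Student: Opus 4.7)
The plan is to verify the two defining conditions of a metric in the author's sense --- symmetry and the triangle inequality (allowing the value $+\hskip -0.4mm \infty$) --- together with the reflexivity condition $\partial_{\mathcal{B},N}(X,X) = 0$. The strategy is to first establish the elementary properties of the auxiliary quantity $\hat{\delta}_{N}$ and then invoke the standard quotient (teleportation) metric formalism through which $\partial_{\mathcal{B},N}$ is built in \eqref{eq:a_nbm}.

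The first step is to show that $\hat{\delta}_{N}$ is symmetric and vanishes on any pair whose two entries coincide. Symmetry follows because partial matchings in $\operatorname{PM}(\mathcal{B}^{n},(\mathcal{B}')^{n})$ are in canonical involutive bijection with partial matchings in the opposite direction via $(z,z') \mapsto (z',z)$; this combines with the symmetry of $\tilde{d}$ (hence of $d^{n}$ and $\hat{d}$, by inspection of their defining formulas) and with the symmetric role played by the sets $\mathcal{A}(i,\cdot)$ and $\mathcal{A}'(i,\cdot)$ in the definition of $\hat{\delta}_{N}$. For the diagonal, given any minimal $A_{N}$-structure $m_{\bullet}$ on $H^{\bullet}(\mathcal{D})$, one selects for every $n \in \ZZ$ the identity matching $\mathcal{P}_{n} = \{(z,z) : z \in \mathcal{B}^{n}\}$; since $d_{P}$ is strict, $\tilde{d}(R,R) = 0$ for every interval $R$, so each $d^{n}$ vanishes on such pairs and the same holds for every $\hat{d}(\mathcal{A}(i,\cdot),\mathcal{A}(i,\cdot))$.

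The second step is to transfer these properties to $\partial_{\mathcal{B},N}$. Symmetry is immediate: the equivalence relation $\sim$ generated by quasi-isomorphism is manifestly symmetric, so reversing a chain $({}^{j}X, {}^{j}X')_{j=1}^{M}$ realizing $\partial_{\mathcal{B},N}(H^{\bullet}(\mathcal{D}),H^{\bullet}(\mathcal{D}'))$ up to $\epsilon$ produces an admissible chain between the reversed endpoints whose total $\hat{\delta}_{N}$-weight coincides, thanks to the symmetry of $\hat{\delta}_{N}$. The triangle inequality is the payoff of the chain-infimum construction: any admissible chain from $X$ to $Y$ concatenated with any admissible chain from $Y$ to $Z$ is again admissible (the common middle structure $Y$ is $\sim$-equivalent to itself via the identity strict quasi-isomorphism), and the total $\hat{\delta}_{N}$-weight of the concatenation equals the sum of the two individual weights. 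Taking infima over $\epsilon > 0$ and handling the value $+\hskip -0.4mm \infty$ with the convention stated in the paper yields the inequality. Reflexivity $\partial_{\mathcal{B},N}(X,X) = 0$ follows by taking the length-one chain $M=1$ with ${}^{1}X = {}^{1}X'$ and applying the diagonal vanishing established in the first step.

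The main obstacle, in my view, is verifying that $\hat{\delta}_{N}$ is genuinely symmetric: while the individual ingredients $\tilde{d}$, $d^{n}$ and $\hat{d}$ are visibly symmetric, the sets $\mathcal{A}(i,z^{n_{1},m_{1}}_{p_{1}^{1},p_{2}^{1}}, \dots, z^{n_{i},m_{i}}_{p_{1}^{i},p_{2}^{i}})$ encoding the support of the higher products $m_{i}$ are not \emph{a priori} paired in a symmetric fashion with their primed counterparts across a tuple $\phi \in (\bigcup_{n} \mathcal{P}_{n})^{[i]}$, so one must check that swapping the two $A_{N}$-algebras amounts precisely to inverting each $\mathcal{P}_{n}$ and relabelling, leaving the supremum-infimum expression invariant. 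Once this bookkeeping is in place, the remaining metric axioms follow formally from the teleportation construction (cf.\ \cite{Him68}; \cite{BBI01}, Def.\ 3.1.12) and from Fact \ref{fact:2}, which guarantees that $\hat{d}$ is itself a well-behaved distance on subsets of $\hat{\mathcal{B}}^{n}$.
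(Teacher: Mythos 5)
You have misidentified where the real work lies. The paper reduces the whole proposition to a single claim: that $\hat{\delta}_{N}$ itself satisfies the triangle inequality. This is stated at the very outset of the published proof and then established by the composition-of-matchings technology used in Fact~\ref{fact:2}: given $\mathcal{P}^{1}\in\operatorname{PM}(\mathcal{B}^{n},(\mathcal{B}')^{n})$ and $\mathcal{P}^{2}\in\operatorname{PM}((\mathcal{B}')^{n},(\mathcal{B}'')^{n})$ realizing the respective pre-bottleneck distances up to $\varepsilon$, one forms the composite matching $\mathcal{P}_{n}$, and then for each $i\leq N$ and each tuple $\phi\in(\bigcup_{n}\mathcal{P}_{n})^{[i]}$ one must carefully factor $\phi$ into a pair of tuples $\phi^{1}$, $\phi^{2}$ (five separate cases, depending on how the $*$-pairings intervene) so as to apply the triangle inequality of $\hat{d}$ to the triple of sets $\mathcal{A}(i,\phi^{1}(\cdot)_{1})$, $\mathcal{A}'(i,\phi^{1}(\cdot)_{2})=\mathcal{A}'(i,\phi^{2}(\cdot)_{1})$, $\mathcal{A}''(i,\phi^{2}(\cdot)_{2})$. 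Your proposal never does this; instead you flag symmetry of $\hat{\delta}_{N}$ as the ``main obstacle'' --- which is essentially immediate from the symmetry of $\tilde{d}$ and the obvious involution on matchings --- and then appeal to a purely formal concatenation-of-chains argument for the triangle inequality of $\partial_{\mathcal{B},N}$.

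That formal argument does not go through as stated. The chains appearing in \eqref{eq:a_nbm} are not unconstrained: for $\partial_{\mathcal{B},N}(H^{\bullet}(\mathcal{D}),H^{\bullet}(\mathcal{D}'))$ they must lie in $\mathcal{A}_{N}^{\mathrm{min}}(A,A')$, i.e.\ consist of minimal $A_{N}$-structures supported on $H^{\bullet}(\mathcal{D})$ or $H^{\bullet}(\mathcal{D}')$ only, and analogously for the pair $(A',A'')$. Concatenating an admissible $(A,A')$-chain from $X$ to $Y$ with an admissible $(A',A'')$-chain from $Y$ to $Z$ therefore produces a chain passing through structures on $H^{\bullet}(\mathcal{D}')$, which do not belong to $\mathcal{A}_{N}^{\mathrm{min}}(A,A'')$, so the result is not an admissible competitor for $\partial_{\mathcal{B},N}(H^{\bullet}(\mathcal{D}),H^{\bullet}(\mathcal{D}''))$. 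The teleportation formalism of Himmelberg and Burago--Burago--Ivanov delivers the quotient distance only once the base function $\hat{\delta}_{N}$ is already known to be a distance; this is precisely why the paper declares the triangle inequality for $\hat{\delta}_{N}$ to be the sufficient claim, and it is the step your proposal would need to supply.
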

\begin{proof}
It suffices to prove that the formula defining $\hat{\delta}_{N}((H^{\bullet}(\mathcal{D}),m_{\bullet}),(H^{\bullet}(\mathcal{D}'),m'_{\bullet}))$ 
satisfies the usual triangular inequality. 
Let us consider again $A$, $A'$ and $A''$ be three $P$-filtered dg algebras, such that the filtrations are essentially discrete and continuous on the left. 
We suppose as usual the multiplicity property for the discrete subsets $P_{s}$, $P'_{s}$ and $P''_{s}$. 
We have thus the $A_{N}$-algebras $(H^{\bullet}(\mathcal{D}),m_{\bullet})$, $(H^{\bullet}(\mathcal{D}'),m'_{\bullet})$ 
and $(H^{\bullet}(\mathcal{D}''),m''_{\bullet})$. 
We will prove that 
\begin{multline*}
     \hat{\delta}_{N}((H^{\bullet}(\mathcal{D}),m_{\bullet}),(H^{\bullet}(\mathcal{D}''),m''_{\bullet})) 
     \\
     \leq 
       \hat{\delta}_{N}((H^{\bullet}(\mathcal{D}),m_{\bullet}),(H^{\bullet}(\mathcal{D}'),m'_{\bullet})) 
       + \hat{\delta}_{N}((H^{\bullet}(\mathcal{D}'),m'_{\bullet}),(H^{\bullet}(\mathcal{D}''),m''_{\bullet})).
\end{multline*}  
Set 
\begin{align*}
d_{1} &= \hat{\delta}_{N}((H^{\bullet}(\mathcal{D}),m_{\bullet}),(H^{\bullet}(\mathcal{D}'),m'_{\bullet})),
\\ 
d_{2} &= \hat{\delta}_{N}((H^{\bullet}(\mathcal{D}'),m'_{\bullet}),(H^{\bullet}(\mathcal{D}''),m''_{\bullet})).
\end{align*} 
If they are infinite there is nothing to prove, so we suppose they are finite. 
Take any real number $\epsilon > 0$ and let 
$\mathcal{P}^{1} \in (\mathcal{P}_{n}^{1})_{n \in \ZZ} \in \prod_{n \in \ZZ} \operatorname{PM}(\mathcal{B}^{n}, (\mathcal{B}')^{n})$ and $\mathcal{P}^{2} \in (\mathcal{P}_{n})_{n \in \ZZ} \in \prod_{n \in \ZZ} \operatorname{PM}((\mathcal{B}')^{n}, (\mathcal{B}'')^{n})$ be such that 
\[     \underset{\text{\begin{scriptsize}
      \begin{tabular}{c}
      $i = 1, \dots, N$
      \\
      $\phi \in (\bigcup\limits_{n \in \ZZ} \mathcal{P}_{n}^{1})^{[i]}$\end{tabular}\end{scriptsize}}}{\operatorname{sup}}  
      \hat{d}(\mathcal{A}(i,\phi(1)_{1}, \dots, \phi(i)_{1}),\mathcal{A}'(i,\phi(1)_{2}, \dots, \phi(i)_{2}))) \leq d_{1} + \epsilon,     \]
      and 
\[     \underset{\text{\begin{scriptsize}
      \begin{tabular}{c}
      $i = 1, \dots, N$
      \\
      $\phi \in (\bigcup\limits_{n \in \ZZ} \mathcal{P}_{n}^{2})^{[i]}$\end{tabular}\end{scriptsize}}}{\operatorname{sup}}  
      \hat{d}(\mathcal{A}'(i,\phi(1)_{1}, \dots, \phi(i)_{1}),\mathcal{A}''(i,\phi(1)_{2}, \dots, \phi(i)_{2}))) \leq d_{2} + \epsilon.     \]
Define $\mathcal{P} = (\mathcal{P}_{n})_{n \in \ZZ}$ such that $\mathcal{P}_{n}$ is the composition of $\mathcal{P}^{1}_{n}$ and 
$\mathcal{P}^{2}_{n}$, for all $n \in \ZZ$, where the composition was recalled in the proof of Fact \ref{fact:2}. 
Take any $i = 1, \dots, N$ and any $\phi \in (\cup_{n \in \ZZ} \mathcal{P}_{n})^{[i]}$. 
By making use of the situations \ref{item:i}, \ref{item:ii}, \ref{item:iii}, or \ref{item:iv}, 
together with its complement, considered in the proof of Fact \ref{fact:2}, we define 
$\phi^{l} \in (\cup_{n \in \ZZ} \mathcal{P}^{l}_{n})^{[i]}$, for $l = 1, 2$, as follows. 
\begin{enumerate}[label=(\emph{\alph*})]
\item
\label{item:a} 
If $\phi(j) = (*,z'')$ and $(*,z'') \in \mathcal{P}^{2}$, by definition of $\mathcal{P}$ it exists $z$ 
such that $(z,*) \in \mathcal{P}^{1}$. 
In this case we define $\phi^{1}(j) = (z,*)$ and $\phi^{2}(j) = (*,z'')$.  

\item\label{item:b}
If $\phi(j) = (z,*)$ and $(z,*) \in \mathcal{P}^{1}$, by definition of $\mathcal{P}$ it exists $z''$ 
such that $(*,z'') \in \mathcal{P}^{2}$. 
Set $\phi^{1}(j) = (z,*)$ and $\phi^{2}(j) = (*,z'')$. 

\item\label{item:c} 
If $\phi(j) = (*,z'')$ and there exists $z' \in S'$ such that $(*,z') \in \mathcal{P}^{1}$ and $(z',z'') \in \mathcal{P}^{2}$, 
define $\phi^{1}(j) = (*,z')$ and $\phi^{2}(j) = (z',z'')$.

\item\label{item:d} 
If $\phi(j) = (z,*)$ and there exists $z' \in S'$ such that $(z,z') \in \mathcal{P}^{1}$ and $(z',*) \in \mathcal{P}^{2}$, 
we choose $\phi^{1}(j) = (z,z')$ and $\phi^{2}(j) = (z',*)$. 

\item\label{item:e} 
If $\phi(j)$ is not of the form $(*,z'')$ or $(z,*)$, by definition of $\mathcal{P}$, there exists $z' \in S'$ such that $(z,z') \in \mathcal{P}^{1}$ and $(z',z'') \in \mathcal{P}^{2}$. 
We set in this situation $\phi^{1}(j) = (z,z')$ and $\phi^{2}(j) = (z',z'')$. 
\end{enumerate}
If either situation \ref{item:a} or \ref{item:b} occurs for some $j = 1, \dots, i$, Fact \ref{fact:2} implies that 
\begin{equation}
\label{eq:des}
\begin{split}
     \hat{d}(\mathcal{A}&(i,\phi(1)_{1}, \dots, \phi(i)_{1}),\mathcal{A}''(i,\phi(1)_{2}, \dots, \phi(i)_{2}))) 
\\ 
&\leq \hat{d}(\mathcal{A}(i,\phi^{1}(1)_{1}, \dots, \phi^{1}(i)_{1}),\mathcal{A}'(i,\phi^{1}(1)_{2}, \dots, \phi^{1}(i)_{2}))) 
\\
&\phantom{\leq} + \hat{d}(\mathcal{A}'(i,\phi^{2}(1)_{1}, \dots, \phi^{2}(i)_{1}),\mathcal{A}''(i,\phi^{2}(1)_{2}, \dots, \phi^{2}(i)_{2}))) 
\\
&\leq d_{1} + d_{2} + 2 \epsilon.     
\end{split}
\end{equation}
If the cases \ref{item:a} or \ref{item:b} do not occur for all $j = 1, \dots, i$, we are only left with the last three cases. 
Note that in any of them we have that $\phi^{1}(j)_{2} = \phi^{2}(j)_{1}$, for all $j = 1, \dots, i$, 
so Fact \ref{fact:2} tells us again that \eqref{eq:des} holds. 
This implies that 
\[     \hat{\delta}_{N}((H^{\bullet}(\mathcal{D}),m_{\bullet}),(H^{\bullet}(\mathcal{D}''),m''_{\bullet})) \leq d_{1} + d_{2} + 2 \epsilon     \]
for all $\epsilon > 0$, which in turn implies the triangular inequality. 
The proposition is thus proved. 
\end{proof}

\begin{remark}
\label{remark:a_nbm}
Note that, for $N=1$, the metric we have defined in \eqref{eq:a_nbm} clearly coincides with the usual bottleneck distance \eqref{eq:bm} between the generalized barcodes associated 
to the filtered dg modules underlying $A$ and $A'$. 
Furthermore, given any elements $N \leq N'$ in $\NN \sqcup \{ + \hskip -0.4mm \infty \}$, we have the obvious inequality 
\[     \hat{\delta}_{N}((H^{\bullet}(\mathcal{D}),m_{\bullet}),(H^{\bullet}(\mathcal{D}'),m'_{\bullet})) \leq \hat{\delta}_{N'}((H^{\bullet}(\mathcal{D}),m_{\bullet}),(H^{\bullet}(\mathcal{D}'),m'_{\bullet}))     \]
which in turn implies that 
\[     \partial_{\mathcal{B},N}(H^{\bullet}(\mathcal{D}),H^{\bullet}(\mathcal{D}')) \leq \partial_{\mathcal{B},N'}(H^{\bullet}(\mathcal{D}),H^{\bullet}(\mathcal{D}')).     \]
On the other hand, we note that, by very definition, the distance function defined by \eqref{eq:a_nbm} is manifestly independent of the model chosen for the $A_{N}$-algebra structures on 
the cohomology of the Rees dg algebras. 
\end{remark}

\bibliographystyle{model1-num-names}
\addcontentsline{toc}{section}{References}

\begin{bibdiv}
\begin{biblist}

\bib{AF92}{book}{
   author={Anderson, Frank W.},
   author={Fuller, Kent R.},
   title={Rings and categories of modules},
   series={Graduate Texts in Mathematics},
   volume={13},
   edition={2},
   publisher={Springer-Verlag, New York},
   date={1992},
   pages={x+376},
}

\bib{BM14}{article}{
   author = {Belch\'{\i}, Francisco},
   author={Murillo, Aniceto},
   title={$A_{\infty}$-persistence}, 
   date={2014},
   eprint={http://arxiv.org/abs/1403.2395},
}

\bib{BGMP14}{article}{
   author={Blumberg, Andrew J.},
   author={Gal, Itamar},
   author={Mandell, Michael A.},
   author={Pancia, Matthew},
   title={Robust statistics, hypothesis testing, and confidence intervals
   for persistent homology on metric measure spaces},
   journal={Found. Comput. Math.},
   volume={14},
   date={2014},
   number={4},
   pages={745--789},
}

\bib{BBI01}{book}{
   author={Burago, Dmitri},
   author={Burago, Yuri},
   author={Ivanov, Sergei},
   title={A course in metric geometry},
   series={Graduate Studies in Mathematics},
   volume={33},
   publisher={American Mathematical Society, Providence, RI},
   date={2001},
   pages={xiv+415},
}

\bib{CFP01}{article}{
   author={Cagliari, Francesca},
   author={Ferri, Massimo},
   author={Pozzi, Paola},
   title={Size functions from a categorical viewpoint},
   journal={Acta Appl. Math.},
   volume={67},
   date={2001},
   number={3},
   pages={225--235},
}

\bib{CdSGO12}{article}{
   author={Chazal, Fr\'ederic},
   author={De Silva, Vin},
   author={Glisse, Marc},
   author={Oudot, Steve},
   title={The structure and stability of persistence modules},
   eprint={http://arxiv.org/abs/1207.3674},
}

\bib{CB13}{article}{
   author = {Crawley-Boevey, William},
   title={Decomposition of pointwise finite-dimensional persistence modules}, 
   date={2013},
   eprint={http://arxiv.org/abs/1210.0819},
}

\bib{dSMVJ11}{article}{
   author={de Silva, Vin},
   author={Morozov, Dmitriy},
   author={Vejdemo-Johansson, Mikael},
   title={Persistent cohomology and circular coordinates},
   journal={Discrete Comput. Geom.},
   volume={45},
   date={2011},
   number={4},
   pages={737--759},
}

\bib{dSMVJ11bis}{article}{
   author={de Silva, Vin},
   author={Morozov, Dmitriy},
   author={Vejdemo-Johansson, Mikael},
   title={Dualities in persistent (co)homology},
   journal={Inverse Problems},
   volume={27},
   date={2011},
   number={12},
   pages={124003, 17},
}

\bib{DD13}{book}{
   author={Deza, Michel Marie},
   author={Deza, Elena},
   title={Encyclopedia of distances},
   edition={2},
   publisher={Springer, Heidelberg},
   date={2013},
   pages={xviii+650},
}

\bib{EH08}{article}{
   author={Edelsbrunner, Herbert},
   author={Harer, John},
   title={Persistent homology---a survey},
   conference={
      title={Surveys on discrete and computational geometry},
   },
   book={
      series={Contemp. Math.},
      volume={453},
      publisher={Amer. Math. Soc., Providence, RI},
   },
   date={2008},
   pages={257--282},
}

\bib{EH10}{book}{
   author={Edelsbrunner, Herbert},
   author={Harer, John L.},
   title={Computational topology. \textrm{An introduction}},
   publisher={American Mathematical Society, Providence, RI},
   date={2010},
   pages={xii+241},
}

\bib{ELZ02}{article}{
   author={Edelsbrunner, Herbert},
   author={Letscher, David},
   author={Zomorodian, Afra},
   title={Topological persistence and simplification},
   note={Discrete and computational geometry and graph drawing (Columbia,
   SC, 2001)},
   journal={Discrete Comput. Geom.},
   volume={28},
   date={2002},
   number={4},
   pages={511--533},
}

\bib{FHT01}{book}{
   author={F{\'e}lix, Yves},
   author={Halperin, Stephen},
   author={Thomas, Jean-Claude},
   title={Rational homotopy theory},
   series={Graduate Texts in Mathematics},
   volume={205},
   publisher={Springer-Verlag, New York},
   date={2001},
   pages={xxxiv+535},
}

\bib{Fre60}{book}{
   author={Freyd, Peter J.},
   title={Functor theory},
   note={Thesis (Ph.D.)--Princeton University},
   publisher={ProQuest LLC, Ann Arbor, MI},
   date={1960},
   pages={56},
}

\bib{FL99}{article}{
   author={Frosini, Patrizio},
   author={Landi, Claudia},
   title={Size theory as a topological tool for computer vision},
   journal={Pattern Recognition and Image Analysis},
   volume={9},
   date={1999},
   pages={596--603},
}


\bib{Ha02}{book}{
   author={Hatcher, Allen},
   title={Algebraic topology},
   publisher={Cambridge University Press, Cambridge},
   date={2002},
   pages={xii+544},
}

\bib{HB11}{book}{
   author={HB, Aubrey},
   title={Persistent Cohomology Operations},
   note={Thesis (Ph.D.)--Duke University},
   publisher={ProQuest LLC, Ann Arbor, MI},
   date={2011},
   pages={119},
}

\bib{Her14a}{article}{
   author={Herscovich, Estanislao},
   title={Hochschild (co)homology and Koszul duality},
   eprint={http://arxiv.org/abs/1405.2247},
}

\bib{Her14}{article}{
   author={Herscovich, Estanislao},
   title={$A_{\infty}$-algebras, spectral sequences and exact couples},
   eprint={http://arxiv.org/abs/1410.6728},
}

\bib{Him68}{article}{
   author={Himmelberg, C. J.},
   title={Quotients of completely regular spaces},
   journal={Proc. Amer. Math. Soc.},
   volume={19},
   date={1968},
   pages={864--866},
}

\bib{K80}{article}{
   author={Kadei{\v{s}}vili, T. V.},
   title={On the theory of homology of fiber spaces},
   language={Russian},
   note={International Topology Conference (Moscow State Univ., Moscow,
   1979)},
   journal={Uspekhi Mat. Nauk},
   volume={35},
   date={1980},
   number={3(213)},
   pages={183--188},
}

\bib{K82}{article}{
   author={Kadeishvili, T. V.},
   title={The algebraic structure in the homology of an $A(\infty)$-algebra},
   language={Russian, with English and Georgian summaries},
   journal={Soobshch. Akad. Nauk Gruzin. SSR},
   volume={108},
   date={1982},
   number={2},
   pages={249--252 (1983)},
}

\bib{LH03}{thesis}{
   author={Lef\`evre-Hasegawa, Kenji},
   title={Sur les $A_{\infty}$-cat\'egories},
   language={French},
   type={Thesis (Ph.D.)--Universit\'e Paris 7},
   note={Corrections at \texttt{http://www.math.jussieu.fr/~keller/lefevre/TheseFinale/corrainf.pdf}},
   place={Paris, France},
   date={2003},
   eprint={http://arxiv.org/abs/math/0310337},
}

\bib{LPWZ09}{article}{
   author={Lu, D.-M.},
   author={Palmieri, J. H.},
   author={Wu, Q.-S.},
   author={Zhang, J. J.},
   title={$A$-infinity structure on Ext-algebras},
   journal={J. Pure Appl. Algebra},
   volume={213},
   date={2009},
   number={11},
   pages={2017--2037},
}

\bib{MSS97}{article}{ 
   title={Assembly of Borromean rings from DNA},
   author = {Mao, Chengde}, 
   author={Sun, Weiqiong},
   author={Seeman, Nadrian C.},
   journal={Nature}, 
   volume ={386}, 
   number={6621}, 
   pages={137--138},
   date={1997},
}

\bib{Ma52}{article}{
   author={Massey, W. S.},
   title={Exact couples in algebraic topology. I, II},
   journal={Ann. of Math. (2)},
   volume={56},
   date={1952},
   pages={363--396},
}

\bib{Ma13}{article}{
   author = {Matschke, Benjamin},
   title={Successive spectral sequences}, 
   date={2013},
   eprint={http://arxiv.org/abs/1308.3187},
}

\bib{Mi72}{article}{
   author={Mitchell, Barry},
   title={Rings with several objects},
   journal={Advances in Math.},
   volume={8},
   date={1972},
   pages={1--161},
}

\bib{Prou11}{article}{
   author={Prout{\'e}, Alain},
   title={$A_\infty$-structures. Mod\`eles minimaux de Baues-Lemaire et
   Kadeishvili et homologie des fibrations},
   language={French},
   note={Reprint of the 1986 original;
   With a preface to the reprint by Jean-Louis Loday},
   journal={Repr. Theory Appl. Categ.},
   number={21},
   date={2011},
   pages={1--99},
}

\bib{Ro99}{article}{
   author={Robins, V.},
   title={Towards computing homology from finite approximations},
   booktitle={Proceedings of the 14th Summer Conference on General Topology
   and its Applications (Brookville, NY, 1999)},
   journal={Topology Proc.},
   volume={24},
   date={1999},
   number={Summer},
   pages={503--532 (2001)},
}

\bib{Sa10}{article}{
   author={Sagave, Steffen},
   title={DG-algebras and derived $A_\infty$-algebras},
   journal={J. Reine Angew. Math.},
   volume={639},
   date={2010},
   pages={73--105},
}

\bib{Sh76}{article}{
   author={Shilane, Lewis},
   title={Filtered spaces admitting spectral sequence operations},
   journal={Pacific J. Math.},
   volume={62},
   date={1976},
   number={2},
   pages={569--585},
}

\bib{Sta04}{book}{
   author={Stanley, Richard P.},
   title={Enumerative combinatorics. Volume 1},
   series={Cambridge Studies in Advanced Mathematics},
   volume={49},
   edition={2},
   publisher={Cambridge University Press, Cambridge},
   date={2012},
   pages={xiv+626},
}

\bib{Sta63}{article}{
   author={Stasheff, James Dillon},
   title={Homotopy associativity of $H$-spaces. I, II},
   journal={Trans. Amer. Math. Soc. 108 (1963), 275-292; ibid.},
   volume={108},
   date={1963},
   pages={293--312},
}

\bib{Wei94}{book}{
   author={Weibel, Charles A.},
   title={An introduction to homological algebra},
   series={Cambridge Studies in Advanced Mathematics},
   volume={38},
   publisher={Cambridge University Press},
   place={Cambridge},
   date={1994},
   pages={xiv+450},
}

\bib{We11}{article}{
   author={Weinberger, Shmuel},
   title={What is$\ldots$persistent homology?},
   journal={Notices Amer. Math. Soc.},
   volume={58},
   date={2011},
   number={1},
   pages={36--39},
}

\bib{Yar10}{thesis}{
   author={Yarmola, Andrew},
   title={Persistence and computation of the cup product},
   type={Thesis (B.Sc.)--Stanford University},
   date={2010},
   eprint={http://mathematics.stanford.edu/wp-content/uploads/2013/08/Yarmola-Honors-Thesis-2010.pdf},
}

\bib{CZ05}{article}{
   author={Zomorodian, Afra},
   author={Carlsson, Gunnar},
   title={Computing persistent homology},
   journal={Discrete Comput. Geom.},
   volume={33},
   date={2005},
   number={2},
   pages={249--274},
}

\end{biblist}
\end{bibdiv}

\end{document}